% Author: Chris Connell
%              \RequirePackage{ifpdf}
%              \ifpdf
%                \documentclass[pdftex,...]{...}
%              \else
%                \documentclass[...]{...}
%              \fi
\documentclass[11pt,centertags]{amsart}
%Other options: dvips,headrule
%\usepackage{amsmath}
%\usepackage{amstext}
%\usepackage{amsgen}
%\usepackage{amsbsy}
%\usepackage{amsopn}
%\usepackage{amsthm}
%\usepackage{amscd}
%\usepackage{amsfonts}          % all the nice fonts -- part of amsart
\usepackage{amssymb}            % extra common ams symbols -- not autoloaded
\usepackage{mathrsfs}             % Adds Script Fonts for Curly scripts see \scr below
\usepackage{hyperref}           % click to jump to refernce in .dvi
\usepackage{ifsym}              % lots of symbols, weather, clocks, legends, random
\usepackage{calc}               % allows arithmetic calculation with quantities
\usepackage{enumerate}          % improved enumeration environment control
\usepackage[all]{xy}            % all the xy picture goodies
\usepackage[usenames]{color}    % for color text
\usepackage{verbatim}           % verbatim classes
\usepackage{fancyhdr}           % super headers --recommended
\usepackage{url}                % appropriately display url's
\usepackage{gloss}              % some nice formatting specials
\usepackage{yhmath}             % yh math fonts

\newif\ifPDF
\ifx\pdfoutput\undefined
    \PDFfalse
\else
    \ifnum\pdfoutput > 0
        \PDFtrue
    \else
        \PDFfalse
    \fi
\fi
\ifPDF
    \usepackage[pdftex]{graphicx}
    \DeclareGraphicsExtensions{.pdf,.png,.jpg}
    \graphicspath{{}}
\else
    \usepackage{graphicx}
    \DeclareGraphicsExtensions{.eps}
    \graphicspath{{}}
\fi

%\pagestyle{fancy}
%\typeout{:?1011}
%\nologo
%\nopagenumbers
%\magnification=1200
%\hcorrection{1pc}

% Environments in theoremstyle
%{}[sec]{} environment counter same as sec
%{}{}[sec] environment counter subservient to sec
%\theoremstyle{plain} % default so not needed
\newtheorem*{main*}{Main Theorem}
\newtheorem{main}{Theorem}

\newtheorem{theorem}{Theorem}[section]
\newtheorem*{theorem*}{Theorem}
\newtheorem{proposition}[theorem]{Proposition}

\newtheorem{corollary}[theorem]{Corollary}

\newtheorem*{question*}{Question}

\newtheorem*{conjecture*}{Conjecture}

\theoremstyle{definition}

\newtheorem*{definition*}{Definition}

\newtheorem{examples}[theorem]{Examples}

\theoremstyle{remark}
\newtheorem{remark}[theorem]{Remark}

\numberwithin{equation}{section}

% Uncomment next line for no page numbers
%\pagestyle{empty}
%\pagestyle{headings}

%Wide margins comment for AMS
%\setlength{\oddsidemargin}{0in}         % \
%\setlength{\topmargin}{-.5in}           %  \    These decide the size
%\setlength{\textwidth}{6.5in}           %  /    of the page
%\setlength{\textheight}{9.5in}          % /
%\setlength{\parskip}{.1in}              % the space between paragraphs.

% Definition type environments

% Not so picky with under/overfull hbox's.
\tolerance=300

% Other Macros below:

%
% FLoat placement parameters:
%
%\def\bottomfraction{.7}
%\def\textfraction{0}
%\def\floatpagefraction{.7}
%

%\def\lref#1{\ref{#1} (#1)}% Prints label as well as number
%\def\ds{\displaystyle}

% all the way-cool fonts you'd ever want
%\usepackage{amsfonts}

% Here are the Blackboard letters for rings and spaceforms

\newcommand{\R}{\mathbb{R}}
\newcommand{\RH}{\mathbb{H}}

% Font Types
\newcommand{\mc}{\mathcal}
\newcommand{\mbf}{\mathbf}

\newcommand{\mf}{\mathfrak}

\newcommand{\scr}{\mathscr}

%Greek
\newcommand{\Ga}{\Gamma}
\newcommand{\ga}{\gamma}
\newcommand{\eps}{\varepsilon }
\newcommand{\La}{\Lambda}

% operators

\DeclareMathOperator{\Iso}{Iso}

\DeclareMathOperator{\vol}{vol}

% Lie algebras and Lie Groups

\DeclareMathOperator{\SL}{SL}

\DeclareMathOperator{\PSL}{PSL}

\DeclareMathOperator{\Sp}{\Sp}

%Manifolds

%Miscellaneous symbols
\newcommand{\op}[1]{\operatorname{#1}}
\newcommand{\set}[1]{\left\{#1 \right\}}

\newcommand{\directsum}{\oplus}

\newcommand{\dvol}{\operatorname{dvol}}

\newcommand{\pa}{\partial }
\newcommand{\bd}{\partial_\infty}

\newcommand{\goto}[1]{\stackrel{#1}{\longrightarrow}}

\providecommand{\to}{\longrightarrow }

\newcommand{\abs}[1]{\left\lvert #1 \right\rvert }
\newcommand{\norm}[1]{\left\| #1 \right\| }
\newcommand{\inner}[1]{\left\langle #1 \right\rangle }

\newcommand{\bs}{\backslash}

% Miscellaneous macros

%\newcommand{\marg}[1]
%  {\mbox{}\marginpar{\tiny\hspace{0pt}#1}}

%\newcommand{\marg}[1]
%  { \marginpar{\raggedleft%\fcolorbox[named]{Salmon}{GreenYellow}
%  {\tiny\hspace{0pt}\textcolor[named]{Fuchsia}{#1} } } }

% end of \newcommands

% \def's which can cause problems

\def\[#1\]{\begin{align*}\begin{split} #1 \end{split}\end{align*} }
\def\[[#1\]]{\begin{align}\begin{split} #1 \end{split}\end{align} }

% \renewcommands which can cause problems
\renewcommand{\tilde}{\widetilde}
\renewcommand{\hat}{\widehat}

\renewcommand{\(}{\left(}
\renewcommand{\)}{\right)}
%\renewcommand{\N}{\mathsf N }
%\renewcommand{\H}{\mathbb{H}} % renewcommand

% end of topmatter

%\include{top-matter}
%\usepackage[english,french]{babel}

%\newtheorem{theorem}{Theorem}
%\newtheorem*{maintheorem}{Main Theorem}
%\newtheorem*{theorem*}{Theorem}
%\newtheorem{proposition}{Proposition}
%\newtheorem*{proposition*}{Proposition}
%\newtheorem{lemma}{Lemma}
%\newtheorem{remark}{Remark}
%\newtheorem{corollary}{Corollary}

%\newcommand{\R}{\mathbb{R}}
%\newcommand{\N}{{\mathbb N}}
%\newcommand{\Z}{\mathbb{Z}}
%\newcommand{\C}{\mathbb{C}}
%\newcommand{\Q}{\mathbb{Q}}

\newcommand{\Fc}{\mathcal{F}}

\newcommand{\Lam}{\scr{L}}
\DeclareMathOperator{\rank}{rank}
\newcommand{\Khat}{\hat}
\renewcommand{\L}{L}
\newcommand{\kk}{\mc{K}}

%\mscnumbers{Primary:53C12,37C40,58J65;Secondary:57R30,37D40}
\begin{document}

\address{Indiana University}
%\email{cconnell@math.indiana.edu}

\address{Universidad de la Rep\'{u}blica}
%\email{martinez.mte@}

\author[C. Connell]{Chris Connell$^\dagger$}
\thanks{$\dagger$ The author was
  supported by NSF grant DMS-0608643.}
\author[M. Mart{\'i}nez]{Matilde Mart{\'i}nez$^\ddagger$}
\thanks{$\ddagger$ The author was supported by ANII, research grant FCE2007\underline{ }106.}

\title[Harmonic Measures]{Harmonic and Invariant Measures on Foliated Spaces}

\begin{abstract}
We consider the family of harmonic measures on a lamination $\Lam$ of a compact space $X$
by locally symmetric spaces $\L$ of noncompact type, i.e. $\L\cong \Gamma_\L\bs G/K$. We
establish a natural bijection between these measures and the measures on an associated
lamination foliated by $G$-orbits, $\hat\Lam$, which are right invariant under a minimal
parabolic (Borel) subgroup $B<G$. In the special case when $G$ is split, these measures
correspond to the measures that are invariant under both the Weyl chamber flow and the
stable horospherical flows on a certain bundle over the associated Weyl chamber
lamination. We also show that the measures on $\hat\Lam$ right invariant under two
distinct minimal parabolics, and therefore all of $G$, are in bijective correspondence
with the holonomy invariant ones.
\end{abstract}
\maketitle

\thispagestyle{empty}

%\selectlanguage{french}
%
%\maketitle
%
%\begin{abstract} $\Lam$ denotes a (compact, non-singular) lamination by hyperbolic Riemann surfaces.
%We prove that a probability measure on $\Lam$ is harmonic if and only if it is the
%projection of a measure on the unit tangent bundle $\hat\Lam$ of $\Lam$ which is invariant
%under both the geodesic and the horocycle flows.
%\end{abstract}
%\begin{otherlanguage}{french}
%\begin{abstract}
%$\Lam$ denote une lamination (compacte, non singuli\`ere) par surfaces de Riemann
%hyperboliques. On montre qu' une mesure sur $\Lam$ est harmonique si et seulement si
%elle est la projection d'une mesure sur le fibr\'e tangent unitaire $\hat\Lam$ qui est
%invariante sous les flots g\'eodesique et horocyclique.
%\end{abstract}
%\end{otherlanguage}
%
%
%

%%%%%%%%%%%%%%%%%%%%%%%%%%%%%%%%%%%%%%%%%%
%%%%%%%%%%%%%%%%%%%%%%%%%%%%%%%%%%%%%%%%%%
%%%%%

\section*{Introduction}

This article explores measures associated to a nonsingular lamination of an arbitrary
compact topological space by locally symmetric spaces. (Note that some authors use the
term ``foliated space'' to mean lamination.) Throughout the paper,~$\Lam$ will denote a
lamination of a topological space $X$ by leaves that are discrete quotients of the same
symmetric space, $G/K$, of noncompact type and which is equipped with transversely
continuously varying smooth metrics. In particular, each leaf of $\Lam$ carries a locally
symmetric metric of nonpositive sectional curvature. We do not assume that these metrics
are uniformized in any particular way. Hence the curvatures, for instance, can vary from
leaf to leaf. Throughout we will assume that the space $X$ supporting $\Lam$ is compact.

We consider two different families of measures associated to~$\Lam$. On the one hand, we
have the measures invariant under the heat diffusion along the leaves of $\Lam$ which are
called \emph{harmonic measures}. These measures often arise in the study of topological
and ergodic properties of a foliation, in part because they generalize the holonomy
groupoid invariant measures which may not always exist. On the other hand, we have the
class of measures on the natural $K$-bundle lamination ~$\hat\Lam$ associated to $\Lam$
that are invariant under the right action by a fixed common choice of Borel subgroup of
$B<G$. In the split case, i.e. when the intersection of every Cartan subgroup with $K$ is
trivial, this latter group of measures has a dynamical characterization. They are the
measures which are invariant under the combined Weyl chamber flow and horospherical
(unipotent) flows of Weyl chambers.

In the case when $\rank(G)\geq 2$, there are two competing notions of harmonicity: vanishing under the standard differential geometric laplacian versus the stricter requirement of vanishing under all second order elliptic $G$-invariant operators which annihilate constants. However, we will show (see Section \ref{subsec:harmonic}) that in our setting these notions coincide.

%correct the decomposition -ok
Our main result reads as follows:

\begin{main}\label{thm:main}
The canonical projection $\hat\Lam\to\Lam$ induces a bijective correspondence between the
$B$-invariant measures on $\hat\Lam$ and the $\Delta_\Lam$-harmonic measures on $\Lam$.
\end{main}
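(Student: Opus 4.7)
The plan is to combine a leafwise Furstenberg--Karpelevich Poisson representation with a transverse disintegration argument. By compactness of $X$ I first fix a finite atlas of foliated charts $U\cong D\times T$ for $\Lam$, with $D$ a plaque and $T$ a local transversal; the preimage in $\hat\Lam$ has the product form $\hat D\times T$, where $\hat D$ is identified with an open piece of $G$ (the $K$-bundle over $D$). In such a chart, Garnett's characterization together with the equivalence of harmonicity conditions established in Section~\ref{subsec:harmonic} yields that a locally finite measure $\mu$ on $\Lam$ is $\Delta_\Lam$-harmonic iff it disintegrates as $\mu|_U=\int_T h_t\,d\vol\,d\nu(t)$ with each $h_t$ a positive leafwise harmonic function on $D$. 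Likewise, a measure $\hat\mu$ on $\hat\Lam$ is right $B$-invariant iff it disintegrates as $\hat\mu|_{\hat U}=\int_T \hat\lambda_t\,d\nu(t)$ with each $\hat\lambda_t$ a right $B$-invariant Radon measure on the local piece of $G$.

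The core of the argument is then a purely leafwise bijection between these two families of disintegration pieces. For a fixed leaf with universal cover $G/K$, Furstenberg--Karpelevich theory identifies positive Radon measures on the Furstenberg boundary $G/B\cong K/M$ bijectively with positive harmonic functions on $G/K$ via the Poisson integral $h(gK)=\int_{G/B}P(gK,\xi)\,d\sigma(\xi)$; this relies on the fact that $G/B$ is the full minimal Martin boundary of $G/K$. The Iwasawa decomposition $G=KAN$ with $B=MAN$ sets up a parallel bijection between positive Radon measures on $G/B$ and right $B$-invariant Radon measures on $G$. Composing, right $B$-invariant Radon measures on $G$ biject with positive harmonic densities on $G/K$, and this bijection is realized by the canonical pushforward under $G\to G/K$ (through the Iwasawa parameterization of $G/K$ by $AN$) in one direction and by the Poisson lift in the other. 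All constructions are $G$-equivariant, so they descend to the $\Gamma$-quotients to yield a bijection between right $B$-invariant measures on $\Gamma\bs G$ and positive-harmonic-density measures on $\Gamma\bs G/K$, with the canonical projection realizing the direction $\hat\mu\mapsto\mu$.

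Finally, to obtain the global bijection I apply the leafwise correspondence plaque-by-plaque over the transversal. Transverse measurability of the lifts follows from the joint smoothness of the Poisson kernel in the basepoint and boundary variable, combined with a measurable selection of representing boundary measures. Compatibility across chart overlaps reduces to the naturality of the Poisson integral under holonomy: the holonomy of $\Lam$ lifts canonically to the holonomy of the $K$-bundle $\hat\Lam$, and this lift commutes with the Furstenberg--Karpelevich bijection. The main obstacle, in my view, is the leafwise Poisson representation for arbitrary (not merely bounded) positive harmonic functions, requiring the full Furstenberg--Karpelevich identification of the minimal Martin boundary with $G/B$; once that classical input is invoked, both injectivity and surjectivity of the main correspondence reduce to uniqueness of the Poisson representation together with standard Rokhlin disintegration.
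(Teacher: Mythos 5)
Your overall architecture --- local disintegration of a harmonic measure as $h_t\,\dvol\times d\nu$, Poisson representation of the leafwise densities by boundary measures, and the Iwasawa-decomposition dictionary between measures on $G/B$ and right $B$-invariant measures on $G$ --- is the same as the paper's. But the classical input you lean on is misstated in exactly the case that matters. When $\rank(G)\geq 2$, the Furstenberg boundary $G/B$ is \emph{not} the full minimal Martin boundary of $G/K$ for the Laplacian: one has $\pa_{\min}G=\coprod_{B}\overline{\mf{a}^+_B(\infty)}$, of which $G/B\cong\pa_F G$ is only the subset of chamber barycenters. Consequently a general positive $\Delta$-harmonic function on $G/K$ is represented by a measure on this strictly larger space, and your claimed bijection between positive Radon measures on $G/B$ and positive harmonic functions on $G/K$ is false in higher rank; the right $B$-invariant measures on $G$ account only for the Poisson integrals over $\pa_F G$. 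As written, your argument proves the theorem in rank one and breaks down otherwise: the correspondence would fail to reach those harmonic measures whose local densities have representing measures charging $\pa_{\min}G\setminus\pa_F G$, unless you show such densities cannot occur.

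That concentration on $\pa_F G$ is a genuine additional step, and it is where the paper does real work. It shows (using the Harnack-type bound on $d_{\Fc}\log h$ from Alcalde Cuesta--Rechtman together with Gaussian heat-kernel decay) that the local densities of a harmonic measure on a compact foliated space are automatically annihilated by \emph{all} $G$-invariant second-order elliptic operators without constant term, not just the Laplacian --- equivalently, the measure is stationary for the leafwise Brownian motion, whose paths converge almost surely to points of $\pa_F G$. Only for this smaller class of functions does the representation live on $G/B$, and only then does the dictionary with right $B$-invariant measures close up. Note also that Furstenberg's theorem as usually quoted covers \emph{bounded} harmonic functions, whereas the $h(\cdot,t)$ here are generally unbounded, so even this reduction needs the growth control supplied by the foliated Harnack inequality. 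If you add this step, the rest of your outline (including the disintegration over the transversal and the fact that right $M$-invariance forces Haar conditionals on the $M$-cosets, which you should make explicit to get uniqueness of the lift) matches the paper's proof.
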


When $\Lam$ is at least transversally continuous, $B$-invariant
measures always exist on $\hat\Lam$ since $B$ is an amenable
group acting continuously on the compact space $\hat\Lam$. In
particular, this gives another proof of the existence of
harmonic measures. Theorem \ref{thm:main} also holds in the
case where $\Lam$ is only transversely measurable, but then the
property of being a harmonic measure becomes more restrictive,
and there may not exist any (see Section
\ref{subsec:harmonic}).% do we even want this comment?

Naturally, the fiber over a given harmonic measure under the induced canonical
projection of measures consists of more than just the unique $B$-invariant measure. We
will also describe measures in this fiber which are invariant only under the right action
by the subgroup $N\rtimes A$ of $B=NAM$. (See Section \ref{sec:prelims} for
definitions.)

%Naturally, the theorem remains true if we exchange the \emph{stable} unipotent action
%for the \emph{unstable} unipotent action. However, we show that not every harmonic
%measure is invariant under both stable and unstable unipotent actions.  In fact, we show
%that there is a canonical bijection between measures invariant under both of these
%unipotent actions and the simplest kind of harmonic measures; namely, those coming
%from measures on transversals which are invariant under the holonomy pseudogroup.

\begin{remark}
The main result of this article was carried out in the special case of surface
laminations, namely when $G=PSL(2,\R)$, by the second author and Yu.Bakhtin
in the two articles \cite{Martinez:06} and \cite{Bakhtin-Martinez:08}. In this
paper, besides establishing the results in complete generality, we also manage
to significantly shorten the  proofs, albeit at the expense of a bit more
machinery.
\end{remark}

We also establish a couple of other related results along the way. Given a
harmonic measure $\mu$, in Theorem \ref{thm:same} we give an explicit
geometric construction for the unique $B$-invariant lift $\hat\mu$ on $\hat
\Lam$ of any given harmonic measure $\mu$ on $\Lam$.

Finally in the last section, we will provide a short proof of the following related result.
\begin{main}[\cite{Muniz-Manasliski:09}]\label{thm:G-inv}
There is a natural bijective correspondence between $G$-invariant probability measures on
$\hat\Lam$ and invariant transverse measures on $\Lam$.
\end{main}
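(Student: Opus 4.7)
The plan is to construct the bijection directly, by exploiting the local product structure of $\hat\Lam$ together with the uniqueness of Haar measure on each $G$-orbit leaf. Note first that $p : \hat\Lam \to \Lam$ is a $K$-bundle whose transverse structure lifts that of $\Lam$, so any foliated chart $U \cong T \times P$ of $\Lam$ lifts to a foliated chart $p^{-1}(U) \cong T \times \hat P$ of $\hat\Lam$ in which $\hat P$ is an open piece of a $G$-orbit. Since $G$ is semisimple and hence unimodular, each leaf $\Gamma_L \bs G$ of $\hat\Lam$ carries a canonical right $G$-invariant Haar measure $\eta_L$, which descends (after fixing a normalization of Haar on $K$) to the leafwise Riemannian volume on the corresponding leaf of $\Lam$.

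For the forward map, given an invariant transverse measure $\nu$ on $\Lam$, I would lift it through $p$ to a transverse measure on $\hat\Lam$ using the bundle trivialization, and then define $\hat\mu_\nu$ locally on $p^{-1}(U)$ as the product $\nu|_T \otimes \eta$, where $\eta$ is Haar measure on $\hat P$. The bi-invariance of Haar on $G$ combined with the holonomy-invariance of $\nu$ ensures this is independent of the chart, and right $G$-invariance of $\hat\mu_\nu$ is immediate from right invariance of Haar on the leaves.

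For the inverse map, disintegrate a given right $G$-invariant Radon measure $\hat\mu$ against the transverse factor on each chart $p^{-1}(U) \cong T \times \hat P$. Right $G$-invariance forces each conditional measure to be a scalar multiple of Haar, by uniqueness (up to scaling) of right-invariant Radon measures on $\Gamma_L \bs G$. The scaling factors assemble into a measure $\nu$ on $T$; comparing disintegrations on two overlapping charts shows both that $\nu$ is holonomy-invariant and that the local pieces glue into a global invariant transverse measure on $\Lam$. By construction these two procedures are mutually inverse.

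The main subtlety will be the careful treatment of leaves of infinite Riemannian volume, where $\eta_L$ is only $\sigma$-finite; however, working throughout with Radon measures and chart-local disintegrations sidesteps this. A secondary point is verifying that the scaling function on $T$ obtained from the disintegration is genuinely holonomy-invariant, but this reduces to the uniqueness of right-invariant Radon measures on $\Gamma_L \bs G$ applied to the two disintegrations induced on an overlap, together with the fact that holonomy transport in $\Lam$ lifts to a $G$-equivariant transverse identification in $\hat\Lam$.
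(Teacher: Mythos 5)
Your argument is correct, and the forward direction coincides with the paper's: both take the product of the invariant transverse measure with leafwise Haar measure on $\Gamma_\L\bs G$, with unimodularity of $G$ supplying the right-invariance. The converse is where you genuinely diverge. The paper does not disintegrate at all; it invokes Connes' equivalence of the various definitions of a holonomy-invariant transverse measure, representing such a measure as a class $[\omega,\mu]$ with $\omega$ a leafwise volume form and $\mu$ an $\omega$-compatible measure, and then simply observes that a $G$-invariant $\hat\mu$ paired with leafwise Haar measure is such a pair. Your route instead disintegrates $\hat\mu$ chart by chart and uses uniqueness of the right $G$-invariant Radon measure on the homogeneous space $\Gamma_\L\bs G$ to pin down the conditionals, then checks holonomy invariance of the resulting transverse factor on chart overlaps. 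This is more self-contained and makes the mechanism explicit, at the cost of the one technical point you correctly flag: right translation by $G$ does not preserve plaques, so deducing that the \emph{plaque} conditionals are restrictions of Haar requires passing from the chart-local disintegration to the leafwise conditional measures (defined up to scale on whole leaves) and arguing that global $G$-invariance forces these to be $G$-invariant, hence Haar. That step should be spelled out rather than asserted, but it is standard and does not constitute a gap; the paper's appeal to Connes is essentially a packaged version of the same bookkeeping.
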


In Section \ref{sec:prelims} we review those concepts that we will need from
the theory of semisimple Lie groups and locally symmetric spaces. In Section
\ref{sec:laminations} we review laminations by symmetric spaces and
harmonic measures. Section \ref{sec:proof} is devoted to the proof of Theorem
\ref{thm:main}. In Section \ref{sec:lift} we give an explicit construction of a
measure on $\hat\Lam$ invariant under a minimal parabolic subgroup that
projects onto a given harmonic measure on~$\Lam$. Finally, in Section
\ref{sec:examps} we give an interesting application along with the proof
Theorem \ref{thm:G-inv}.

 %This will be reduced to a probabilistic
%statement on the regularity of the distribution of the radial component of the Brownian
%motion on the universal symmetric space in Section 3. The uniqueness of the measure
%will be shown in Section 4 and in Section 5 we establish the result about transverse
%invariant measures. Finally in Section 5 we construct several interesting examples of
%such laminations and applications to harmonic measures and uniquely ergodic
%measures.

\section*{Acknowledgments} Both authors are grateful for the hospitality of Andr\'{e}s
Navas at the Universidad de Santiago de Chile where this work was initially conceived. We
are also thankful to the anonymous referee for a careful reading and numerous corrections
and helpful suggestions. The second author is thankful for helpful comments from
M.~Brunella communicated by C.~Bonatti.

%%%%%%%%%%%%%%%%%%%%%%%%%%%%%%%%%%%%%%%%%%
%%%%%%%%%%%%%%%%%%%%%%%%%%%%%%%%%%%%%%%%%%
%%%%%%
\section{The action of $G$ and the Weyl chamber flow and unipotent flows}\label{sec:prelims}

Let $G$ be a semisimple Lie group of noncompact type with Lie algebra
$\mf{g}$. Choose a Cartan subalgebra $\mf{h}\subset\mf{g}$. Let $\La$ be
the corresponding set of  nonzero roots giving the corresponding root space
decomposition $\mf{g}=\mf{h}\directsum_{\alpha\in \La}\mf{g}_\alpha$,
where $\mf{h}$ is identified with the $0$-root space $\mf{g}_0$. Moreover we
have the associated Cartan decomposition $\mf{g}=\mf{k}+\mf{p}$.   We
denote the Cartan subspace of $\mf{p}$ by $\mf{a}=\mf{p}\cap \mf{h}$ and
its centralizer by $\mf{m}=\mf{k}\cap \mf{h}$.

Let $K$ be the maximal compact subgroup of $G$ whose Lie algebra is
$\mf{k}$. The projection $G\to G/K$ is a diffeomorphism when restricted to
the subset $\exp(\mf{p})\subset G$. Moreover under the identification of $G$
with the group of isometries $\Iso(G/K)$, $K$ can be identified with the
isotropy subgroup $K_p$ of some point $p\in G/K$. Doing so, we can describe
any geodesic through $p$ as $\exp(t h)\cdot p$ where $h\in\mf{p}$.
Moreover, $h$ belongs to at least one maximal abelian subalgebra living in
$\mf{p}$. By varying $p$, and hence the choice of maximal compact $K=K_p$
and the corresponding Lie group and Lie algebra decompositions, all geodesics
can be described as orbits under left actions by one parameter subgroups.

We let $A=\exp(\mf{a})$, and denote the centralizer and normalizer of $A$ in
$K$ by $M$ and $M'$ respectively. They both have the same Lie algebra
$\mf{m}$ introduced above. Moreover,  $M$ is normal in $M'$ and $W=M'/M$
is the corresponding Weyl group.
%Setting $\mf{n}_0$ to be the orthogonal complement of
%$\mf{a}\directsum\mf{k}_0$ in $\mf{h}$,
Choosing a positive Weyl chamber $\mf{a}^+\subset \mf{a}$, we obtain a
corresponding set of positive roots $\La^+$. We write the maximal nilpotent
subalgebra of $\mf{g}$ corresponding to this decomposition as
$\mf{n}=\directsum_{\alpha\in\La^+}\mf{g}_\alpha$. The corresponding
maximal unipotent subgroup will be denoted by $N<G$. The corresponding
{\em Borel} subgroup is the maximal solvable subgroup $B=MAN<G$ which
corresponds to the maximal solvable subalgebra
$\mf{b}=\mf{h}\directsum\mf{n}=\mf{m}\directsum\mf{a}\directsum\mf{n}$.
%This lives in the same minimal parabolic subgroup $B=NAM$ whose Lie algebra
%is $\mf{h}\directsum_{\alpha\in\La^+}\mf{g}_\alpha$, not to be confused with
%the subspace $\mf{p}$.

Recall that a semisimple Lie group has the following Bruhat decomposition $G=\cup_{w\in
W}BwB$ where $W<K$ is the isomorphic copy of the Weyl group generated by reflections across
chamber walls of $\mf{a}$. Since the Weyl group is simply transitive on chambers, $M$ can
also be described as the stabilizer of any single Weyl chamber of $\mf{a}$. Therefore we can
identify $G/M$ as the bundle of Weyl chambers of the symmetric space $G/K$. In the case that
$G$ has rank $1$, $G/M$ is just the (total space of the) unit tangent bundle. Note that by
the Iwasawa decomposition $G=KAN=NAK$, we may identify $G/K$ with a solvable subgroup $NA$,
and the Weyl chamber bundle $G/M$ with $NA(K/M)$. For instance if $G=PO(n,1)=\Iso(\RH^n)$
then $K/M=SO(n)/SO(n-1)\cong S^{n-1}$ so $G/M= NAK/M\cong \RH^n \times S^{n-1}$ is naturally
identified with the unit tangent bundle $T^1\RH^n$.

Note that the right translation action of $M$ on both $G/M$ and $G/K$ is
trivial. For $G$ of arbitrary rank, the geodesic flow naturally generalizes to the
Weyl chamber flow by the right action of $A$ on $G/M$. (Of course the
geodesic flow on $T^1(G/K)$ exists in higher rank as well, but it is not induced
by a right action.) Since $M$ is the centralizer of $A$ this action makes sense
and commutes with the quotient of the action of $M$ on $G$. In other words,
for any $a\in A$, we have $gMa=gaM$.

This action of $A$ on $G/M$ is simply the quotient under $M$ of the right
action of $A$ on $G$. Dissimilarly, the right action of $N$ on $G$ does not
descend to an action on $G/M$ since $M$ does not, in general, centralize $N$,
and hence the action of $N$ does not commute with $M$. More explicitly, the
map $gM\mapsto gnM$ is not well defined since for any $k\in M$ we have
$gM=gkM$, but $gnM\neq gknM$.

We will need to introduce a few other notions. The Furstenberg boundary of
$G/K$ is the space $G/B$. From the Iwasawa decomposition, we naturally see
that $G/B=K/M$ is the compact space of Weyl chambers of $G$ based at $e\in
G$. Since $N<B$, the left action by this subgroup fixes points in $G/B$. Every
Weyl chamber $C\subset G/K$ asymptotically approaches a Weyl chamber $C'$
which passes through $eK$. More specifically $\bd C=\bd C'\subset \bd G/K$.
Hence the Weyl chamber $C$ at infinity can be identified with some point
$[C]=[C']=q\in G/B$.  The action of $N$ on $G/B$ fixes the canonical point
$eB\in G/B$ and each element of $N$ fixes no other point. Moreover, for any
element $q\in G/B$ there is an element $k\in K$, unique modulo $M$, such that
$kq=eB\in G/B$.

%\section{Harmonic measures as projections of measures invariant under the affine group.}

\section{Laminations by symmetric spaces}\label{sec:laminations}

We say $\Lam $ is a $C^{r,s,u}$ \emph{lamination} for
$r,s,u\in[0,\infty]\cup\set{\omega}$ with $s\leq r$ if there is a separable, locally compact space
$X$ that has an open covering $\{ E_i\}$ and an atlas $\{(E_i,\varphi_i)\}$
satisfying:

\begin{enumerate}
\item $\varphi_i:E_i\rightarrow D_i\times T_i$ is a homeomorphism, for some open
    ball $D_i$ in $\R^n$ and a topological space $T_i$, and

\item the coordinate changes $\varphi_j\circ\varphi_i^{-1}$ are  of the form
    $(z,t)\mapsto (\zeta (z,t),\tau(t))$, where each $\zeta$ is $C^r$ smooth in the $z$
    variable with all derivatives up to order $s$ varying in a $C^u$ smooth way in the
    $t$-variable .
\end{enumerate}

This last condition says that the sets of the form $\varphi_i^{-1}(D_i\times \{t\})$, called
\emph{plaques}, glue together to form $n$-dimensional $C^r$-manifolds that we call
\emph{leaves} which vary in a $C^u$ way up to order $s$. Said differently, the $s$-jet
transversal holonomies are $C^u$.

We will also be interested in considering measurable $C^{r,s}$ laminations, which are $C^r$
smooth in the $z$ variable with all derivatives up to order $s$ varying measurably in the $t$
variable. As a shorthand, a continuous (resp. measurable) lamination means a
$C^{\infty,\infty,0}$ (resp. $C^{\infty,\infty}$ measurable) lamination. In what follows we
will suppress the mention of regularity, as the needed regularity is generally obvious from
the context.

In any class of maps between foliated spaces we can define the subclass of laminated maps to
be those which carry leaves to leaves. A laminated fibration (or fibre bundle) $E\to X$ is a
fibration map between the laminated spaces $E$ and $X$ which is a laminated map and whose
local product structure is compatible with the local product structure of the laminations. As
an example, given  a lamination $\Lam$ by $C^1$ manifolds, we can define the tangent bundle
lamination $T\Lam$ to be the lamination which is locally the product of the tangent space to
the leaves with the same transversals as $\Lam$. Similarly, an assignment of metrics to the
leaves of $\Lam$ corresponds to a section of the laminated bundle $S^2T^*\Lam\to \Lam$ of
symmetric 2-tensors on the cotangent bundle $T^*\Lam$.

We will say that a lamination $\Lam$ is a \emph{locally
symmetric lamination} if the disks $D_i$ are open subsets of
$G/K$ and the maps $\zeta$ are isometries in the $z$ variable.
When the underlying space $X$ is compact, each leaf $L$ of
$\Lam$ will be a complete locally symmetric space, and
therefore each of the form $\Ga_\L\bs G/K$ for some discrete
subgroup $\Ga_\L<G$. Each leaf on a locally symmetric
lamination $\Lam$ will be endowed with the locally symmetric
metric resulting from pulling back by the chart. The transverse
regularity of $\Lam$, being the same as that of the isometries
$\zeta(\cdot,t)$, will be the transverse regularity of the
identification of the leaves with the locally symmetric spaces
$\Ga_L \bs G/K$.

For any locally symmetric lamination $\Lam$, we can construct its associated $K$-bundle
lamination $\Khat \Lam$ as follows. First construct the principal $K$-bundle $\Khat
X\goto{\pi} X$ over $X$ whose fiber over a point $x\in L$, identified with the double
coset $\Ga_L g_x K$ for some $g_x\in G$ is simply again $\Ga_L g_x K$, but now viewed as
a set of left cosets.

The bundle charts $(\Khat E_i,\Khat \varphi_i)$ with $\Khat\varphi_i: E_i\to K\times D_i\times
T_i$ cover the chart $(E_i,\varphi_i)$ from $\Lam$ in the sense that the following diagram
commutes,
$$\xymatrix{
  \Khat{E_i} \ar[d]_{\pi} \ar[rr]^{\Khat{\varphi_i}}
               & & K\times D_i\times T_i \ar[d]^{\pi_0\times \op{id}}  \\
  E_i  \ar[rr]_{\varphi_i}
               & &    D_i\times T_i }$$
where $\pi_0$ is the natural projection and $\pi$ is the restricted bundle map.

The coordinate changes $\Khat\varphi_j\circ{\Khat{\varphi}}_i^{-1}$
are  of the explicit form $(k,z,t)\mapsto (\eta(k,z,t),\zeta
(z,t),\tau(t))$ where $\zeta$ and $\tau$ are as above, and
$\eta$ is an isometry of $K$. Finally we form the lamination
$\Khat\Lam$ of the underlying space $\Khat X$ by taking the lifts
of the leaves $L$ of $\Lam$ under $\pi$ to obtain leaves $\Khat
L$ each of dimension $n+\dim(K)$. Strictly speaking, the charts
$(\Khat E_i,\Khat \varphi_i)$ do not form a set of lamination
charts for $\Khat{\Lam}$ since $K$ is not generally a
topological disk. This can easily be taken care of by finitely
subdividing the charts $(\Khat E_i,\Khat \varphi_i)$ further into
restricted charts $(\Khat {U_\alpha},\Khat{\phi_\alpha})$ where
for each new index $\alpha$, $\Khat{U_\alpha}\subset \Khat{E_i}$
for some $i=i(\alpha)$ and $\Khat{\phi_\alpha}:\Khat{U_\alpha}\to
\Khat{D_\alpha}\times T_i$ is a homeomorphism equal to the
restriction of $\Khat{\varphi_i}$ and where
$\Khat{D_\alpha}\subset K\times D_i$ is a topological ball.

We endow each  leaf $\Khat \L$ of $\Khat\Lam$ with the unique
right $K$-invariant metric which projects to the metric on the
leaf  $\L=\Gamma_\L\bs G/K$ of $\Lam$. Thus each leaf
$\Khat{\L}$ is isometric to $\Gamma_\L\bs G$ and the
regularity of $\Khat\Lam$ will be identical to that of $\Lam$.

What is less obvious is that there is a consistent right action of $G$ on $\Khat{\Lam}$.
In other words, the identification of each leaf $\L$ with $\Gamma_L\bs G/K$ can be done
consistently with the foliation structure. This is the result of Proposition 2.5 of
\cite{Zimmer:88a}.

The transverse regularity of the isometries $\eta$ and $\zeta$
guarantees that the right $G$ action on each leaf of $\Lam$ has
the same transverse regularity. In other words, action can be
defined on each leaf using the identification $\L=\Gamma_\L\bs
G/K$, and the compatibility and regularity of charts guarantee
the regularity, locally and hence globally, of the action. Note
that this does not imply that the $\Ga_\L$ vary continuously,
and indeed they generally do not.

Lastly, we note that we can similarly define the lamination $\check{\Lam}$ whose leaves are
of the form $\L=\Ga_\L\bs G/M$ as bundle over $\Lam$. Since the metric on the leaves of
$\Khat{\Lam}$ are both right and left $K$-invariant, and $M<K$, we could also realize
$\check{\Lam}$ as a leafwise metric quotient of $\Khat{\Lam}$.

%If $\Lam$ is a lamination by locally symmetric leaves $\L=\Gamma_\L\bs
%G/K$ of dimension $n$, then we denote by $\Khat{\Lam}$ the $n+\dim(K)$
%dimensional lamination whose leaves are the canonical $K$ bundles of the
%leaves of $\Lam$ and the flow boxes are canonical lifts of those of $\Lam$.
%Namely, for $U_\alpha$ open in $\Lam$ and $U$ an open ball of $G/K$ such
%that $\phi: U_\alpha\to U\times T_\alpha$ is a foliation chart of $\Lam$, then
%there is a corresponding homeomorphism (foliation chart) $\Khat{\phi}_\alpha:
%U_\alpha\to \Khat{U}\times T_\alpha$ where $U_\alpha$ is an open set
%$\Khat{\Lam}$ and $U$ is an open set of $G$ such that the following diagram
%commutes:
%
%$$\xymatrix{
%  \Khat{U_\alpha} \ar[d]_{\pi} \ar[r]^{\Khat{\phi}_\alpha}
%                & \Khat{U}\times T_\alpha \ar[d]^{id\times\pi_0}  \\
%  U_\alpha  \ar[r]_{\phi_\alpha}
%                &    U\times T_\alpha  }$$
%where $\pi$ and $\pi_0$ are the natural projections. Similarly we define
%$\check{\Lam}$ to be the corresponding lamination with leaves
%$\L=\Ga_\L\bs G/M$.

\begin{examples}
\begin{enumerate}
\item By a theorem due to Candel (see \cite{Candel:93}), any continuous lamination by
Riemannian surfaces for which the laminated Euler characteristic is negative
admits a globally continuous conformal change of leafwise metric to ones of
constant negative curvature. The resulting lamination will be a symmetric
space lamination locally by $\RH^2=\PSL(2,\R)/SO(2)$.  Note that any
closed leaf of genus $g>1$ admits a $6g-6$ dimensional family, the
Teichm\"{u}ller space, of conformally inequivalent metrics. Each such metric
can be extended to a continuous family of metrics on the foliation by a
partition of unity argument. In this case, Candel's theorem produces a
corresponding family of pairwise metrically distinct hyperbolic laminations.

%The resulting lamination is a these metrics on the
%leaves, as well as all their derivatives, have continuous variation in the
%transverse direction.

\item If we have a group action by a semisimple group $G$ on $X$ for which
the stabilizer of each point is conjugate to $K$, then the orbits form a
symmetric space lamination. The metrics on the leaves, each one
homeomorphic to $G/K$, are induced from the Killing form on $G$. The
transversal regularity of the foliation coincides with the (transversal)
regularity of the action.

\item Another source of laminations comes from suspensions of a discrete group action.
    Let $\Gamma$ be a discrete group acting freely and properly discontinuously on a
    manifold $M$ so that $\Gamma\bs M$ is a compact (Hausdorff) manifold. Suppose
    $\Gamma$ also admits an action on a compact space $Y$ by homeomorphisms, which we
    indicate as a right action. Then we can form the compact space $X=Y\times_\Gamma
    M=(Y\times M)/\sim$ where $(y,z)\sim (y\gamma, \gamma z)$ for all $\gamma\in \Gamma$.
    The space $X$ carries the structure of a compact lamination with a global transversal
    $Y$ whose leaves are the images of $M$ in the quotient space $X$. If $M=G/K$ then the
    leaves are locally symmetric spaces with each $\Ga_L<\Ga$.
\end{enumerate}
\end{examples}

Throughout the remainder of this paper, $\Lam $ will always denote a continuous (that is
$C^{\infty,\infty,0}$) locally symmetric lamination on a compact space $X$, unless otherwise
specified.

\subsection{Harmonic measures}\label{subsec:harmonic}

Each leaf $L$ of $\Lam$, being a Riemannian manifold, has a  Laplace-Beltrami operator
$\Delta_L$. If $f:X\rightarrow\R$ is a function of class $C^2$ in the leaf direction and
$x\in\Lam$, we define $ \Delta_\Lam f(x)=\Delta_L f|_L(x),$ where $L$ is the leaf passing
through $x$ and $f|_L$ is the restriction of $f$ to $L$.  For some purposes it is convenient
to extend $\Delta_\Lam$ to the functions which leafwise belong to the Sobolev space
$H^2(L,\R)$. By the standard regularity theory for elliptical operators, such an extension
would not enlarge the class of $L^2$ harmonic functions.

Let $C^2(\Lam)$ denote the continuous leafwise $C^2$ functions on $\Lam$.  We say that a
Radon measure $\mu$ is \emph{harmonic} if $\Delta_\Lam \mu=0$ weakly; i.e. if $\int
\Delta_\Lam f\, d\mu=0$ for all $f\in C^2(\Lam)$.

If we only assume that $\Lam$ is transversely measurable ($C^{\infty,\infty}$ measurable),
then we will call $\mu$ harmonic only if it satisfies the more stringent condition that $\int
\Delta_\Lam f\, d\mu=0$ for all Borel measurable leafwise $C^2$ functions $f$. The more
stringent requirement is necessary to guarantee that all compactly supported functions on a
given leaf are included as test functions.

\begin{remark}
Since a general measurable leafwise $C^2$ function $f$ will be unbounded, we
cannot guarantee the existence of local extrema for $f$. The latter is a key step
in the the application of the Hahn-Banach theorem to show existence of
harmonic measures in the transversely continuous case (see Lemma 3.4 and
Theorem 3.5 of \cite{Candel:03}). Consequently, we cannot guarantee the
existence of harmonic measures in the transversely measurable setting.

%We also wish to point out that in the case of higher rank symmetric spaces, $G/K$, there are at least two divergent traditions governing the notion of harmonic functions. We have followed the common differential geometric notion that a harmonic function is one which vanishes under the Laplacian, or equivalently have the ususal averaging property (see e.g. \cite{Helgason77}. As seen in the relevant references below, this is the most common notion for harmonic measures on foliations as well. However there is a, perhaps more algebraic, tradition in which harmonic functions are required to vanish under all $K$-invariant second order differential operators on $G$ (see e.g. \cite{Shitak-Weitz}). In higher rank, the Laplacian is no longer the unique such operator. Nevertheless, for harmonic functions on bounded functions, it follows from the seminal work of Furstenberg \cite{Furstenberg:63} that these two notions coincide.
\end{remark}

Let $\dvol$ signify the Riemannian volume measure on each leaf $\L$. This volume is the
projection of the Haar measure of $G$ to $\Ga_\L \bs G/K$. It is known (e.g. Proposition 5.2
of \cite{Candel:03}) that every harmonic measure $\mu$ on $\Lam$ can be decomposed locally on
a flow box as $d\mu(x,t)=h(x,t)\dvol(x)\times d\sigma(t)$ where $\sigma$ is a measure on the
transversal $T$, and for $\sigma$-almost every $t\in T$, $h(\cdot,t)$ is an harmonic function
on the corresponding plaque. Moreover, this decomposition is not unique, but if
$d\mu(x,t)=h'(x,t)\dvol(x)\times d\sigma'(t)$ is another decomposition, then
$h(x,t)=\(\frac{d\sigma'}{d\sigma}(t)\) h'(x,t)$. In other words, $h$ is well defined up to a
positive constant multiple on each plaque.

As mentioned in the introduction, for the case of higher rank symmetric spaces, $G/K$, there appears a potentially distinct tradition governing what it means for a function, and by extension a measure, to be harmonic. We have up to this point followed the common differential geometric notion that a harmonic function is one which vanishes under the standard Laplacian, or equivalently possesses the usual spherical averaging property. However the most common tradition in the special setting of higher rank locally symmetric manifolds requires vanishing under the larger family of all $G$-invariant second elliptic order operators without constant term. This corresponds to having the self-averaging property on all $K$-orbits which are proper subsets of spheres. In higher rank, the Laplacian is no longer the unique such operator. Nevertheless, for bounded functions, it follows from the seminal work of Furstenberg \cite{Furstenberg:63} that these two notions coincide. We now show that essentially the same proof yields the coincidence of these two points of view in our setting as well.

\begin{proposition}
The functions $h(\cdot,t)$ and measure $\mu$ defined above are harmonic with respect to all $G$-invariant second order elliptic operators which annihilate constants. 	
\end{proposition}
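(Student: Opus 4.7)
The plan is to reduce the claim to a statement on $G/K$ itself and then invoke the Furstenberg--Karpelevich integral representation for positive harmonic functions. Write $G = G_1 \times \cdots \times G_k$ as a product of simple factors with corresponding Laplacians $\Delta_i$ on $G_i/K_i$. By the Harish--Chandra isomorphism $\mathbb{D}(G/K) \cong S(\mathfrak{a})^W = \bigotimes_i S(\mathfrak{a}_i)^{W_i}$, the degree-$1$ part of $S(\mathfrak{a})^W$ vanishes (each $W_i$ acts on $\mathfrak{a}_i$ without nonzero fixed vectors) and the degree-$2$ part is spanned by the Killing forms of the individual factors. Hence every second-order elliptic $G$-invariant operator on $G/K$ takes the form $D = \sum_i c_i \Delta_i + c_0$, and annihilating constants forces $c_0 = 0$. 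So it suffices to show $\Delta_i h(\cdot,t) = 0$ for each $i$; the corresponding statement for $\mu$ then follows from the local disintegration $d\mu = h\, d\vol \times d\sigma$ together with integration by parts, since the formal adjoint $D^*$ is again a second-order $G$-invariant elliptic operator annihilating constants.

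Next, I would extend the plaque density $h(\cdot,t)$ to a global positive $\Delta$-harmonic function on $G/K$. On a single leaf $L \cong \Gamma_L \bs G/K$, the densities on overlapping plaques differ only by positive multiplicative constants---the disintegration ambiguity noted immediately before the proposition---so they patch together, modulo a single global scalar per leaf, into a positive $\Delta$-harmonic function on $L$. Lifting to the universal cover yields a $\Gamma_L$-quasi-invariant positive $\Delta$-harmonic function $\tilde h$ on $G/K$.

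Finally, I would invoke the Furstenberg--Karpelevich theorem: every positive $\Delta$-harmonic function on $G/K$ admits a Poisson integral
\[
\tilde h(x) = \int_{G/B} P(x,\xi)\, d\nu(\xi)
\]
against a positive Radon measure $\nu$ on $G/B$, with Furstenberg--Poisson kernel $P(x,\xi) = e^{-2\rho(H(x^{-1}k))}$. Because the Iwasawa projection $H$ and the half-sum of positive roots $\rho$ split over the simple factors, this kernel factorizes as $P(x,\xi) = \prod_i P_i(x_i,\xi_i)$, and each $P_i(\cdot,\xi_i)$ is $\Delta_i$-harmonic on $G_i/K_i$ (being the Poisson kernel of that factor). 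Integrating against $\nu$ yields $\Delta_i \tilde h = 0$, and pulling back to the plaque gives $\Delta_i h(\cdot,t) = 0$, as required. The hardest step will be the global extension: verifying that the positive multiplicative cocycle of disintegration constants along the leaf is trivializable so that $h$ genuinely descends to a global positive harmonic function; a secondary difficulty is the invocation of the Martin boundary identification with the Furstenberg boundary for higher-rank symmetric spaces, which is what makes the Poisson representation available.
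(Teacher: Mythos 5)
Your route is genuinely different from the paper's (which extends Furstenberg's Theorem 4.4 by showing the heat semigroup still fixes $h$ despite its unboundedness, using the Harnack bound on $d_{\mathcal F}\log h$ together with the Gaussian decay of the heat kernel), and your first two steps are sound: the filtration-preserving Harish--Chandra isomorphism does reduce the class of operators to $\sum_i c_i\Delta_i$, and the extension of $h(\cdot,t)$ to a global positive harmonic function is handled exactly as you suspect, by passing to the holonomy cover of the leaf. The fatal step is the third one. It is \emph{not} true in rank $\geq 2$ that every positive $\Delta$-harmonic function on $G/K$ is a Poisson integral over $G/B$ against the Furstenberg kernel $e^{-2\rho(H(x^{-1}k))}$: the minimal Martin boundary at eigenvalue $0$ is strictly larger than the Furstenberg boundary, containing the whole union $\coprod_B\overline{\mathfrak a^+_B(\infty)}$ of closed chambers at infinity (the paper states this explicitly, citing I.7.18 of Borel--Ji). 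What you call a ``secondary difficulty'' is in fact the entire content of the proposition.

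To see concretely that no argument using only positivity, $\Delta$-harmonicity and the Harnack bound can work, take $G=G_1\times G_2$ and positive eigenfunctions $u_i$ on $G_i/K_i$ with $\Delta_1u_1=\lambda u_1$ and $\Delta_2u_2=-\lambda u_2$ for a small $\lambda\neq 0$. Then $u_1\otimes u_2$ is positive and $\Delta$-harmonic, and by Yau's gradient estimate it even has bounded logarithmic gradient; yet $(2\Delta_1+\Delta_2)(u_1u_2)=\lambda\,u_1u_2\neq 0$, and $2\Delta_1+\Delta_2$ is elliptic, $G$-invariant and annihilates constants. Your argument, applied verbatim to $u_1u_2$, would ``prove'' $\Delta_iu_1u_2=0$, which is false. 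The missing ingredient is the dynamical origin of the $h(\cdot,t)$: the paper uses that a harmonic measure is stationary for the leafwise Brownian motion, that Brownian paths converge almost surely to points of $\partial_F G\cong G/B$, and the growth control coming from compactness of $X$, to force the representing measure $\mu_t$ onto $G/B$ before any Poisson representation is invoked. Your proposal never uses any property of $h(\cdot,t)$ beyond positivity and $\Delta$-harmonicity, so it cannot close.
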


\begin{proof}
The statement reduces to showing that, for a given $h(\cdot,t)$, the global well-defined extension $\hat{h}$ along plaques on the holonomy cover of the leaf is harmonic with respect to the other invariant elliptic operators.

This statement is proven in Theorem 4.4 of \cite{Furstenberg:63} for the case when $\hat{h}$ is bounded. However, the boundedness condition for this direction is used in the proof in only one place namely, to show that
\begin{align}\label{eq:limit_comm}
\frac{d}{dt}_{|_{t=0}}\int_G \tilde{h}(gg')d\mu_t(g')=\tilde{h}(g)
\end{align}
where $\tilde{h}$ is the lift to $G$ of $\hat{h}$ and $\mu_t$ is the heat semigroup for the standard Laplacian on $G$. Since this holds for compactly supported functions, we need to show that the tail of the integral vanish, as a function of $t$, at least as fast as $o(t)$.

On the other hand, Proposition 4.4 of \cite{Costa-Rechtman} shows that the $1$-form $\eta=d_\Fc \log h$ is bounded via a Harnack-type inequality. This implies that such a function $\hat{h}$ on the holonomy cover can only grow at most exponentially fast.
However, the heat kernel has decay described in equation \eqref{eq:heat_bound} that we analyze at the end. In particular its highest order decay is Gaussian in the distance and $t$, which means that the portion of the integral outside of any ball centered at the identity both converges and vanishes faster than any polynomial in $t$.  Hence we obtain \eqref{eq:limit_comm} and the proposition.
\end{proof}

\begin{remark}
The averaging property of harmonic functions under certain $K$-invariant measures $\eta$ on $G$ translates into an invariance of a harmonic measure $\mu$ under convolution by $\eta$, namely $\eta\ast \mu=\mu$. Such measures $\mu$ are called {\em $\eta$-stationary}, and were introduced and explored in \cite{Furstenberg:63b}.  While it is more common to consider stationary measures on a boundary for a random walk generated by $\eta$, in our current setting harmonic measures are stationary under the measure generating leaf-wise Brownian motion, a point of view that we will explore shortly.
\end{remark}

Later on, we will need to generalize the principle behind the above extensions
to certain operators on measures. Let $\mc{A}$ and $\mc{A}'$ be two
subspaces of $\mc{B}(X)$, the space of Borel functions on the underlying space
$X$ of $\Lam$,  and let $\mc{M}$ and $\mc{M}'$ be two subspaces of signed
Borel measures on $X$. Given an operator $D:\mc{A}\to\mc{A}'$, if for any
$\mu\in\mc{M}$ there is a unique measure $D(\mu)\in \mc{M}'$ satisfying
$$\int_{\Lam} f d\,D(\mu)=\int_{\Lam} D(f) d\mu\quad\text{for all}\quad f\in \mc{A},$$
then we can extend $D$ to an operator $D:\mc{M}\to\mc{M}'$. For instance, if
$D$ is (continuous) linear, and bounded by some norm, and $\mc{A}$ is dense
in $C(X)$, then $D$ extends to the Radon measures by an application of the
Hahn-Banach theorem followed by the Riesz theorem.

%Let $\mc{F}$ be the space of all leafwise $L^2(dvol_L)$ Borel measurable
%functions $f:\Lam\rightarrow\R$, and l
\section{Proof of Theorem \ref{thm:main}}\label{sec:proof}

%The maximal compact subgroup $K$ is the stabilizer of a some point $p\in \til \L$.
Let $p\in G/K$ denote the identity coset of $K$. Using the
Iwasawa decomposition we may express $G/K$ as $N\rtimes A$
where $A$ and $N$ are from Section \ref{sec:prelims}. The group
$N=N_\zeta$ is the unipotent group corresponding to the
nilradical which fixes some (regular) point $\zeta\in G/B$ and
similarly $A=A_{\zeta}$ also fixes $\zeta$. Under this
identification the volume form on $G/K$, which is itself just
the projection of the Haar measure on $G$ to $G/K$, coincides
with the left Haar measure on $N_\zeta \rtimes A_\zeta$.  This
in turn coincides with the left Haar measure on $N_\zeta
\rtimes A_\zeta$ for any choice of $x$ and $\zeta$ under the
identification of this simply transitive subgroup with $G/K$.
However, unlike the semisimple group $G$ and the compact group
$K$, the solvable group $N_\zeta \rtimes A_\zeta$ is not
unimodular. The modular function $\delta$ on $N_\zeta \rtimes
A_\zeta$ may be expressed in terms the Poisson kernel $\kk$.
More precisely, $\delta(g)=\kk(p,gp,\zeta)$ for $g\in N_\zeta
\rtimes A_\zeta$. Hence we can write the right Haar measure
$m^R_{N_\zeta \rtimes A_\zeta}$ as the product of
$\kk(p,gp,\zeta)$ and the left Haar measure $m^L_{N_\zeta
\rtimes A_\zeta}$ on $N_\zeta \rtimes A_\zeta$.

Let $m_{G/B}$ be the projection of Haar measure on $K$ to $K/M=G/B$. (Note
that we may make the identification $K/M=G/B$ since the action of $K=K_p$,
while fixing $p$, acts transitively on the regular points $\xi\in G/B$ and
$M=M_p^\xi$ is the stabilizer of the regular point $\xi$ in $K_p$.) The
Furstenberg boundary, $G/B$, equipped with $m_{G/B}$ is naturally
isomorphic as a measured $G$-space to the Poisson boundary of $G/K$ for the
Brownian motion, and the corresponding Poisson kernel is $\kk$ (e.g.
see\cite{Guivarch-Ji-Taylor98}).

In the flow box decomposition for $\mu$ given in the last section, each function $h(\cdot
,t)$ is a positive harmonic function on the plaque corresponding to $t$, and therefore
admits a unique extension to the plaques along any simple path in the leaf. Whenever a
leaf of the lamination has trivial holonomy, then all extensions agree to give a unique
globally defined harmonic function on $\Ga_{\L} \bs G/K$. In particular, this holds on the
holonomy cover $L'$ of any leaf $L$. Without loss of generality, in what follows we may
assume that $L$ has trivial holonomy as we will only be using $h$ locally. Passing to the
universal cover of the leaf, we again denote the $\Ga_{\L}$ invariant harmonic function on
$\tilde{L}$ by $h$. As a positive harmonic function on $G/K$ it admits a unique integral
representation on the minimal Martin boundary $\pa_{\min}G$ of $\tilde{L}\cong G/K$,
namely $h(x,t)=\int_{\pa_{\min}G}\kk(p,x,\xi) d\mu_t(\xi)$ (see I.7.9 of \cite{Borel-Ji:06}).
Here $\mu_t$ is a Borel measure associated uniquely to $h(\cdot,t)$ and $\kk(p,x,\xi)$ is the
Martin Kernel associated to the point $\xi\in B_M$ for a fixed basepoint $p\in G/K$.

In general, $\pa_{\min}G$ can be written as the disjoint union,
$$\pa_{\min}G=\coprod_B \overline{\mathfrak{a}^+_B(\infty)}$$
where $B$ runs over all minimal parabolic subgroups of $G$ and
$\overline{\mathfrak{a}^+_B(\infty)}$ is the closure of the geodesic boundary of the
corresponding positive Weyl chamber (see I.7.18 of \cite{Borel-Ji:06}). In the case that $G$
has rank greater than one, $\pa_{\min}G$ is strictly larger than the Poisson Boundary
$G/B$, as a topological space. The latter naturally sits inside $\pa_{\min}G$ as the
Furstenberg boundary $\pa_{F}G\subset \pa_{\min}G$, the union of a single point
representing the barycenter of each simplex $\overline{\mathfrak{a}^+_B(\infty)}$. If
$h(\cdot,t)$ were globally bounded then $d\mu_t=f_t dm_{G/B}$ for a unique positive
function $f_t$ on $G/B$ representing the limit values of $h(\cdot,t)$ along random walks.

It turns out that the $h(\cdot,t)$ need not be a globally bounded harmonic function. On
the other hand, the $h(\cdot,t)$ are not quite arbitrary since they are leafwise
Jacobians of a finite measure on a compact foliated space which places a restriction on
their overall growth. As explained in Section 6 of \cite{Candel:03}, the diffusion
operator generating the harmonic measure only depends on the Brownian motion on each
leaf. Since every Brownian path almost surely coverges to a point in $\pa_{F}G$
(see 6.2 of \cite{Guivarch-Ji-Taylor98}), it follows that the lifted local harmonic functions
$h(\cdot,t)$ are representable by measures on the Poisson boundary as,
$$h(x,t)=\int_{\pa_F G}\kk(p,x,\xi) d\mu_t(\xi),$$
where we now consider $\mu_t$ to be a measure with support on $\pa_F G\cong G/B$.

%We can also consider the corresponding $\Ga_L$-invariant harmonic function, again called
%$h$, on $G/K$. Since $\kk$ is the reproducing kernel on $G/B$ for harmonic functions on
%$G/K$ (\cite{Guivarch-Ji-Taylor98}\marg{Location!!}), we can express this $h$ as
%$$h(x)=\int_{G/B} \kk(p,x,\xi)f(\xi) dm_{G/B}(\xi)$$
%for some $f\in L^1(G/B)$ determined by the limits of $h$ along Brownian motions which
%converge almost surely in $G/B$.

Since $\kk(p,\ga^{-1}x,\xi)=\kk(\ga p,x,\ga \xi)=\kk(p,\ga^{-1}p, \xi)\kk(p,x,\ga \xi)$,
the $\Ga_\L$ invariance of $h$ implies that
$\frac{d\ga^{-1}_*\mu_t}{d\mu_t}(\xi)=\kk(p,\ga^{-1}p,\xi)$. In particular, if
$\Gamma_{\L}$ acts ergodically on $G/B$ with respect to $\mu_t$,  such as when the leaf
$L$ has finite volume, then uniqueness of conformal densities implies that $\mu_t$ is a
multiple of $dm_{G/B}$. This implies that $h$ is the constant function $\norm{\mu_t}$.

\begin{remark}
An alternate approach to the representation of harmonic functions arises from using hyperfunctions in the sense of Sato. (These form a class of generalized distributions.) It was shown in \cite{KKMOOT78} that every harmonic function can also be represented via a Poisson formula applied to a hyperfunction on $G/B$. In particular, the measures $\mu_t$ above also arise as hyperfunctions.
\end{remark}

Take a Borel measurable section of $K/M$ into $K$ represented by $\xi\to k^\xi$. (In
particular we have $k_\xi\zeta=\xi$.) We first note that we can express $\dvol$ on
$G/K$ as the projection under $P:G\to G/K$ of the left invariant Haar measure on the
orbit $N_\zeta \rtimes A_\zeta\cdot k^\xi$ in $G$ for any fixed $\zeta\in G/B$. This
latter measure can be written as the push forward $k^\xi_* dm^L_{N_\zeta \rtimes
A_\zeta}$. Hence fixing $\zeta\in G/B$ we may write:
\begin{align}\label{eq:harmonic}
\begin{split}
h(x,t)\dvol_{L} &=\(\int_{G/B} \kk(p,x,\xi) d\mu_t(\xi)\)\dvol_{G/K}\\
&=\int_{G/B} \kk(p,x,\xi) P_*k^\xi_*dm^L_{N_\zeta \rtimes A_\zeta} d\mu_t(\xi) \\
&=\int_{G/B}  \kk(k^\xi p,x,k^\xi\zeta) P_*k^\xi_*dm^L_{N_\zeta \rtimes A_\zeta} d\mu_t(\xi) \\
&=\int_{G/B}  P_*k^\xi_*\(\kk(p,x,\zeta)dm^L_{N_\zeta \rtimes A_\zeta}\) d\mu_t(\xi) \\
&=\int_{G/B}  P_*k^\xi_* dm^R_{N_\zeta \rtimes A_\zeta}d\mu_t(\xi).
\end{split}
\end{align}

%Since $L$ is locally $G/K$, we may express the relationship between the measures
%pointwise by,
%\begin{align*}
%h(x)\dvol_{L} &=\(\int_{G/B} \kk(p,x,\xi)f(\xi) dm_{G/B}(\xi)\) \dvol_{G/K}\\
%&=\int_{G/B} \kk(e,x,\xi)f(\xi) dm^L_{N_\xi \rtimes A_x^\xi} dm_{G/B}(\xi) \\
%&=\int_{G/B} f(\xi)  dm^R_{N_\xi \rtimes A_x^\xi}dm_{G/B}(\xi).
%\end{align*}

%We also note that if the measure $\mu$ is globally harmonic on $\Lam$ then
%the conditional measure $\tau_\L$ on each leaf $\L\in\Lam$ corresponding to
%the measurable decomposition $\Lam$, when lifted to its universal cover
%$\tilde{\L}\cong G/K$ is $\Gamma_\L$ equivariant on the left. In particular the
%$f$ above is left $\Gamma_{\L}$ invariant.

In order to lift $\mu$ we first need to choose a family of
Borel probability measures $\{\mu_{x,\xi}\}$, each
supported on its corresponding left coset $x k^\xi
M_p^\zeta=M_{xp}^{x\xi} x k^\xi$ for $x\in N_\zeta\rtimes
A_\zeta$. (These will correspond to the conditional measures under the disintegration of $\hat{\mu}$ along $\mu$.)
% left off. Fix AN to NA and always use NAK? Check Helgason and Eberlein.
We will eventually see that $\mu_{x,\xi}$ must be chosen to be the copy of the Haar measures on $x k^\xi M_p^\zeta$. We will progressively impose conditions on
these measures until they are uniquely specified. First, they must be left $\Gamma_{\L}$ equivariant, by which we mean that $\gamma_* \mu_{x,\xi}=\mu_{\gamma
x,\gamma \xi}$ for all $\gamma\in
\Gamma_{\L}$ where $L$ is the leaf of $\Lam$ containing $x$. Decomposing $\mu$ in each
foliated chart, we obtain $d\mu=d\tau_t\times d\sigma$ where $\sigma$ is a measure on the
transversal $T$ and $\tau_t$ is a measure on the plaque passing through $t\in T$. Since
$\mu$ is a harmonic measure, the measure $\tau_t$ locally has the form $h \dvol_{L}$ for
an $\Delta_L$-harmonic function $h$, as described above. Usually, $K$ as an $M$ bundle
over $K/M$ is not trivial. (Consider, for instance, the case when $L=\RH^n$ in which case
$G=PO^+(n,1)$, $K=SO(n)$, $M=SO(n-1)$ and $K/M=S^{n-1}$.) However, $K$ still admits a
product structure measurably, and so if we decompose $G$ either locally (continuously) or
globally (continuously off of a measure zero subset) as $G= N\times A\times M\times K/M$
then the above discussion shows that we can describe all of the lifts of $\mu$ which are
left Haar measures on each orbit of $N \rtimes A$ as
$$d\hat\mu(x,k,\xi,t)= d\mu_{x,\xi}(k)
d\mu_t(\xi) dm^R_{N_\zeta \rtimes A_\zeta}(x)  d\sigma(t),$$ where we have identified the support of $\mu_t$ with $K_p/M_p^{\zeta}=G/B$.

If we wish to have $\hat\mu$ invariant under the right action of $N_\zeta \rtimes
A_\zeta$, we first require that the measures $\mu_{x,\xi}$ satisfy
$g_*\mu_{x,\xi}=\mu_{xg,\xi g}=\mu_{x g,\xi}$ where the initial push-forward is under the
right action by $g\in N_\zeta \rtimes A_\zeta$. Second, we need that $\mu_t\sigma$ is
right invariant under $N_\zeta \rtimes A_\zeta$, but this follows from the fact that if
$t'$ is the transversal point corresponding to the plaque containing $tg^{-1}$, then
$\frac{d\mu_t}{dg_*\mu_{t}}=\frac{d\mu_t}{d\mu_{t'}}$ is the constant function on $G/B$
with value $\frac{dg_*\sigma}{d\sigma}(t)$, as explained previously.

%This follows from the fact that the projection of Haar measure on $K_x/M_{x}^\zeta$ is
%sent to the projection of the Haar measure on $K_{xg}/M_{xg}^\zeta$ under the right
%action by $g\in N_\zeta \rtimes A_\zeta$.

If we further require that $\hat\mu$ be invariant under the right action by
$M_{p}^\zeta$, then since the $\mu_t$ measures on $K_p/M_{p}^\zeta$ are already right
$M_{p}^\zeta$ equivariant, one only needs to require that the measures $\mu_{x,\xi}$ be
right equivariant. However, the only right invariant measure on $x k^\xi M_p^\zeta$ for
any ${x,\xi}$ is the (right=left) Haar measure, $m_{M_p^\zeta}$, so  $\mu_{x,\xi}=(x
k^\xi)_*m_{M_p^\zeta}$. Note that this is compatible with the right action of the group
$N_\zeta \rtimes A_\zeta$ which is transitive on $x\in G/K$.

%By the construction the measure $\hat\mu$ is locally invariant under the right action of
%$N \rtimes A$, at least for elements sufficiently close to the identity so that the orbit
%of a point stays within its plaque. Now we wish to extend this globally. We can lift a
%foliated chart from $\Lam$ to $\hat\Lam$ by taking the product with $K$ since on each
%plaque the corresponding $K$-bundle is trivial, while this is not an honest chart for
%$\hat\Lam$ it is more convenient. On the overlap of two such charts we require that the
%$K$ fibers agree, which we can always arrange by using an appropriate subdivision of the
%atlas of $\Lam$ such that the union of two charts and their intersections are topological
%balls. \marg{correct this}

However there is no problem to do the extension, since this
right action, which is globally defined on the entire
lamination $\hat\Lam$, agrees on the overlap of two charts
which have the same form in $\hat\Lam$ as in $\Lam$, except for
the factor $K$. Moreover note that  and therefore the action
can be extended to all of $\hat\Lam$. Hence there is a unique
right $B$-invariant lift $\hat\mu$ of $\mu$. This finishes the
proof of one direction of Theorem \ref{thm:main}.

From the above identifications, we immediately obtain the following corollary.
\begin{corollary}\label{cor:fiber}
The $N \rtimes A$ right invariant Borel measures $\hat\mu$ on $\hat\Lam$ are in one to one
correspondence with measurable sections of the bundle  $E\to G/B$ whose fiber over
$\xi \in G/B$ is the space of Borel probability measures on $M_{p,\xi}$.
\end{corollary}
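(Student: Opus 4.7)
The plan is to extract this corollary directly from the analysis already carried out in the proof of Theorem \ref{thm:main}, which produced the explicit decomposition
\[
d\hat\mu(x,k,\xi,t) = d\mu_{x,\xi}(k)\, d\mu_t(\xi)\, dm^R_{N_\zeta \rtimes A_\zeta}(x)\, d\sigma(t),
\]
for a candidate lift, in which each $\mu_{x,\xi}$ is a probability measure supported on the coset $x k^\xi M_p^\zeta$. That proof also recorded exactly which invariance of $\hat\mu$ forces which constraint on the $\mu_{x,\xi}$: invariance under the right action of $M_p^\zeta$ collapsed them to the Haar measure on their coset, giving the bijection with harmonic measures, while mere $N\rtimes A$-right invariance imposes only the equivariance relation $g_*\mu_{x,\xi}=\mu_{xg,\xi}$ for $g\in N_\zeta \rtimes A_\zeta$.

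First I would use the fact that $N_\zeta\rtimes A_\zeta$ acts simply transitively on itself (and hence, via the Iwasawa identification, on $G/K$) to conclude that the equivariance relation reduces the entire family $\{\mu_{x,\xi}\}$ to the subfamily $\{\mu_\xi\}_{\xi\in G/B}$, where $\mu_\xi := \mu_{e,\xi}$ is a probability measure on the single coset $k^\xi M_p^\zeta$. Next I would identify this coset with the stabilizer $M_{p,\xi}$ of $\xi$ in $K_p$: since $k^\xi\zeta = \xi$, conjugation gives $M_{p,\xi} = k^\xi M_p^\zeta (k^\xi)^{-1}$, so right multiplication by $(k^\xi)^{-1}$ provides a canonical bijection between probability measures on $k^\xi M_p^\zeta$ and probability measures on $M_{p,\xi}$. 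Measurability of the family $\{\mu_\xi\}$ as $\xi$ varies over $G/B$ is precisely measurability of the resulting section of $E\to G/B$.

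For the converse, given a measurable section $\xi\mapsto \mu_\xi$ of $E$, define $\mu_{x,\xi}$ by transporting $\mu_\xi$ via the unique $g\in N_\zeta\rtimes A_\zeta$ with $x = eg$, and plug the family into the displayed formula. The invariance under $N\rtimes A$ is built into the construction, and measurability of the section guarantees the resulting $\hat\mu$ is Borel.

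The main subtlety to address will be checking that the formula, originally derived in a foliated chart, genuinely defines a global Borel measure on $\hat\Lam$: that is, compatibility on overlaps and descent through the $\Gamma_\L$-action on each leaf $\Gamma_\L\bs G$. However, this is the same compatibility used in the $B$-invariant case of Theorem \ref{thm:main}, since the only additional piece of data in passing from a Haar choice to a general $\mu_\xi$ is a measurable function on $G/B$, and $G/B$ is naturally defined independently of the leaf. Hence no new compatibility check beyond the one already established is required, and the corollary follows.
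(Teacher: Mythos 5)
Your proposal is correct and follows essentially the same route as the paper: the paper derives the corollary ``immediately'' from the chart decomposition $d\hat\mu(x,k,\xi,t)= d\mu_{x,\xi}(k)\, d\mu_t(\xi)\, dm^R_{N_\zeta \rtimes A_\zeta}(x)\, d\sigma(t)$ together with the observed equivariance constraint $g_*\mu_{x,\xi}=\mu_{xg,\xi}$ for $g\in N_\zeta\rtimes A_\zeta$, which is exactly the reduction (via simple transitivity of $N_\zeta\rtimes A_\zeta$ on the leaf) to a measurable family $\xi\mapsto\mu_\xi$ of probability measures on the conjugates $M_{p,\xi}=k^\xi M_p^\zeta (k^\xi)^{-1}$ that you carry out. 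Your explicit treatment of the converse and of the chart-overlap compatibility is, if anything, slightly more detailed than the paper's one-line justification.
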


\subsection{The Projection of an $N \rtimes A$ Invariant Measure}

The purpose of this section is to prove the second half of the statement in the theorem.

\begin{proposition}
  The projection of a measure on $\hat\Lam$ which is right $N \rtimes A$ invariant is
  harmonic.
\end{proposition}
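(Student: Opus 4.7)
The plan is to pull back $\Delta_\Lam f$ via $\pi$ to $\hat\Lam$, express it as a left-invariant differential operator in directions of $\mf{n}\directsum\mf{a}$, and then integrate to zero against $\hat\mu$ using the right $N\rtimes A$-invariance.

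Given $f\in C^2(\Lam)$, set $\hat f = f\circ\pi$; this is leafwise $C^2$ and right $K$-invariant on $\hat\Lam$. Each leaf of $\hat\Lam$ carries the bi-$K$-invariant metric on $\Gamma_\L\bs G$, and $\pi$ restricts to a Riemannian submersion with totally geodesic, compact fibers. Hence the leafwise Laplace--Beltrami operator on $\hat\Lam$ acts on right $K$-invariant functions as the image of the Casimir element $\Omega\in U(\mf{g})$, realized as a left-invariant differential operator on each leaf $\Gamma_\L\bs G$, and in particular $(\Delta_\Lam f)\circ\pi$ coincides with the action of $\Omega$ on $\hat f$.

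The key Lie-theoretic input is the Iwasawa decomposition of the Casimir. Using $\mf{g}=\mf{n}\directsum\mf{a}\directsum\mf{k}$ and rearranging in the PBW basis, one obtains
\[
\Omega \;=\; \mathcal{D} \;+\; \mathcal{R},\qquad \mathcal{D}\in U(\mf{n}\directsum\mf{a}),\qquad \mathcal{R}\in U(\mf{g})\,\mf{k}.
\]
Since $\mathcal{R}$ annihilates right $K$-invariant functions, one has $(\Delta_\Lam f)\circ\pi = \mathcal{D}\hat f$. Explicitly, with $\{X_j\}$ a basis of $\mf{n}$, $\{H_i\}$ an orthonormal basis of $\mf{a}$, and $H_\rho\in\mf{a}$ the element dual to the half-sum of positive roots, a direct PBW computation yields the familiar horospherical form
\[
\mathcal{D} \;=\; \sum_j X_j^2 \;+\; \sum_i H_i^2 \;-\; 2H_\rho.
\]

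Now exploit the hypothesis. For $X\in\mf{n}\directsum\mf{a}$, let $X^L$ denote the corresponding left-invariant vector field on $G$, i.e.\ the infinitesimal generator of right translation by $\exp(tX)$. For any leafwise $C^1$ function $\phi$ on $\hat\Lam$, differentiating the invariance identity $\int \phi(x\exp(tX))\,d\hat\mu(x) = \int \phi\,d\hat\mu$ at $t=0$ gives $\int X^L\phi\,d\hat\mu = 0$; dominated convergence applies because $\hat\Lam$ is compact and the leafwise derivatives involved are bounded. Iterating this for products of such vector fields shows $\int D\phi\,d\hat\mu=0$ for every $D\in U(\mf{n}\directsum\mf{a})$. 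Applying this with $D=\mathcal{D}$ and $\phi=\hat f$,
\[
\int_\Lam \Delta_\Lam f\,d\mu \;=\; \int_{\hat\Lam} (\Delta_\Lam f)\circ\pi\,d\hat\mu \;=\; \int_{\hat\Lam}\mathcal{D}\hat f\,d\hat\mu \;=\; 0.
\]
As $f$ was arbitrary, $\mu = \pi_*\hat\mu$ is harmonic.

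The main technical obstacle is the Iwasawa decomposition of the Casimir, namely verifying $\Omega\in U(\mf{n}\directsum\mf{a})+U(\mf{g})\,\mf{k}$ together with the explicit form of $\mathcal{D}$; this is a classical but bookkeeping-heavy computation in semisimple Lie theory (see e.g.\ Helgason). A minor secondary issue, easily handled, is the iterated exchange of differentiation and integration when promoting invariance from first-order to second-order operators; this is justified by the compactness of $\hat\Lam$ and the continuity of the relevant leafwise derivatives of $\hat f$.
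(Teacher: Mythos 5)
Your argument is correct, but it takes a genuinely different route from the paper's. The paper first invokes the structure of right $N_\zeta\rtimes A_\zeta$-invariant measures: since this group acts freely and transitively on its orbits in each leaf $\Ga_\L\bs G$, the conditional measures of $\hat\mu$ on those orbits must be right Haar measures, so $\hat\mu$ disintegrates as $d\mu_{x,\xi}(k)\,d\mu_t(\xi)\,dm^R_{N_\zeta\rtimes A_\zeta}(x)\,d\sigma(t)$; the projection then kills the fiber factor, and reading the chain of identities \eqref{eq:harmonic} backwards identifies the leafwise density as the Poisson integral $\int_{G/B}\kk(p,\cdot,\xi)\,d\mu_t(\xi)$, which is harmonic. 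You avoid any disintegration and argue dually on test functions: the Iwasawa/PBW decomposition $U(\mf{g})=U(\mf{n}\oplus\mf{a})\oplus U(\mf{g})\,\mf{k}$ lets you replace $(\Delta_\Lam f)\circ\pi$ by $\mathcal{D}\hat f$ with $\mathcal{D}$ a constant-term-free left-invariant operator in the $\mf{n}\oplus\mf{a}$ directions, and right $N\rtimes A$-invariance of $\hat\mu$ annihilates $\int\mathcal{D}\hat f\,d\hat\mu$ by differentiating the invariance identity under the integral (legitimate, as you note, by compactness). Your route is more self-contained --- it needs neither the Poisson-kernel identity nor the classification of invariant conditional measures, only the classical horospherical form of the Laplacian --- whereas the paper's route has the advantage of exhibiting the local density explicitly, which it uses elsewhere. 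Two small points to tighten: (i) the leaf metrics are $G$-invariant but not normalized to the Killing-form metric and may vary from leaf to leaf, so $\Delta_L$ is not literally the Casimir; it is nonetheless a $G$-invariant second-order operator annihilating constants, hence still of the form $\mathcal{D}_L+\mathcal{R}_L$ with $\mathcal{D}_L\in U(\mf{n}\oplus\mf{a})$ constant-term-free, leafwise constant, and uniformly bounded by compactness and transverse continuity --- which is all your integration step actually uses; (ii) the Riemannian-submersion/totally-geodesic-fibers framing is an unnecessary detour, since the identity $(\Delta_\Lam f)\circ\pi=\Omega\hat f$ on right $K$-invariant functions is simply the statement that $\Delta$ is the $G$-invariant operator attached to $\Omega$.
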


\begin{proof}
We may write a right  $N_\zeta \rtimes A_\zeta$-invariant measure $\hat\mu$ as before
as
$$d\hat\mu(x,k,\xi,t)= d\mu_{x,\xi}(k)
d\mu_t(\xi) dm^R_{N_\zeta \rtimes A_\zeta}(x)
d\sigma(t),$$ with the additional requirement that the measures $\mu_{x,\xi}$
satisfy $g_*\mu_{x,\xi}=\mu_{xg,\xi}$ for all $g\in \Gamma_\L$.
% left off
Since the measures $\sigma$ are preserved and the measures $\mu_{x,\xi}$ are killed
under the projection, it remains to verify that the projection is locally $h\dvol_L$ on the
given plaque of the leaf $L\in\Lam$. However, this is precisely what was checked in
Equation \ref{eq:harmonic}, but starting from the last line and working backwards.
\end{proof}

\begin{remark}
Note that it is not necessary that the measure $\hat\mu$ be $N_\zeta \rtimes A_\zeta$
invariant in order for it to be a lift of a harmonic measure. However, it is a bit
cumbersome to precisely describe the fiber over a harmonic measure if it does not
possess the global invariance property.
\end{remark}

This also completes the proof of Theorem \ref{thm:main}.

\section{The lift of a harmonic measure}\label{sec:lift}

\subsection{Heat Kernels}
The laminated heat operator $\scr{H}=\frac{d}{dt}- \Delta_\Lam$ has
fundamental operator solution given by $D_t=e^{t\Delta_\Lam}$, which is to be
interpreted as a formal power series in $\Delta_\Lam$. Note that the collection
$\{ D_t\} _{t\geq 0}$ forms an abelian semigroup known as the
\emph{laminated heat semigroup}.

The  heat diffusion along the leaves can be described in terms of the
fundamental solution of the heat equation known as the \emph{laminated heat
kernel} and denoted by $p_t:\Lam\times \Lam\rightarrow
\R$ for $t\in \times (0,+\infty )$. It is given by
\begin{equation*}
p_t(x,y)=
\begin{cases}p_t^{\L}(x,y),& \mbox{if}\ x,y\ \mbox{belong to the same leaf}\ L\\ 0,& \mbox{otherwise,}
\end{cases}
\end{equation*}
where $p_t^{\L}$ is the heat kernel on $\L$. The laminated heat semigroup can
be expressed as
$$D_tf(x)=\int_{\L_x} p_t(x,y)f(y)\, dy,$$
for all $f\in C^2(\scr{L})$, where $\L_x$ is the leaf through $x$ and $dy$ represents
the Riemannian volume measure on $\L_x$.

A measure $m$ on $\Lam$ is \emph{ergodic} with respect to the holonomy
pseudogroup of the lamination if $\Lam$ can not be partitioned into two
measurable leaf-saturated subsets having positive $m$ measure. Lucy Garnett
(\cite{Garnett:83}) proved the following Ergodic Theorem for harmonic
measures, a nice interpretation of which can be found in \cite{Candel:03}.

\begin{theorem}\label{thm:Garnett}
Let $\mu$ be any harmonic measure on $\Lam$. For $\mu$-almost every
$x\in \Lam$, the limit of Krylov--Bogolyubov means
$$\tilde{\delta_x}:=\lim_{n\to\infty}\frac{1}{n}\sum_{t=0}^{n-1}D_t\delta_x$$
exists and is an ergodic harmonic measure. Moreover, every ergodic harmonic
measure arises this way.
\end{theorem}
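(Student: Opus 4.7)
The plan is to realize Theorem~\ref{thm:Garnett} as the Krylov--Bogolyubov ergodic decomposition for the Markov process given by leafwise Brownian motion. The laminated heat semigroup $D_t = e^{t\Delta_\Lam}$ has heat kernel $p_t(x,y)$, and these transition probabilities define a Markov process on the compact space $X$ whose paths almost surely stay in a single leaf. Expanding $D_t$ as a power series in $\Delta_\Lam$ shows that harmonicity of $\mu$ is equivalent to $D_t^{*}\mu = \mu$ for all $t \geq 0$, so the harmonic measures are precisely the stationary measures of this Markov process.

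Next I would lift the dynamics to the path space $\Omega = C([0,\infty), X)$, endowing it for each $x \in X$ with the law $P_x$ of leafwise Brownian motion starting at $x$. For any harmonic $\mu$, the measure $P_\mu = \int P_x \, d\mu(x)$ is invariant under the time shift $\theta_s$. Apply Birkhoff's ergodic theorem on $(\Omega, P_\mu, \theta_s)$ to the function $\omega \mapsto f(\omega(0))$ for each $f \in C(X)$, push the result down via Fubini, and conclude that for each such $f$ there is a full $\mu$-measure set on which $\frac{1}{n}\sum_{t=0}^{n-1} (D_t f)(x)$ converges. Using separability of $C(X)$ on the compact space $X$ to choose a countable dense test family, and weak-$\ast$ compactness of probability measures, these one-dimensional limits assemble into a single probability measure $\tilde{\delta_x}$ satisfying $\tilde{\delta_x}(f) = \lim \frac{1}{n}\sum_{t=0}^{n-1} (D_t f)(x)$ for all $f \in C(X)$.

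Harmonicity of $\tilde{\delta_x}$ comes from a telescoping argument: $D_s \tilde{\delta_x} - \tilde{\delta_x}$ is a finite number of boundary terms divided by $n$ and hence vanishes as $n \to \infty$. For ergodicity and the converse statement, I would invoke the ergodic decomposition of the Markov process: the map $x \mapsto \tilde{\delta_x}$ is measurable and constant along $P_x$-a.e.\ trajectory, so it realizes the ergodic decomposition of $\mu$, and consequently $\tilde{\delta_x}$ is ergodic for $\mu$-a.e.\ $x$. For the converse, if $\nu$ is any ergodic harmonic measure, apply the construction with $\mu = \nu$; ergodicity forces $\tilde{\delta_x} = \nu$ for $\nu$-a.e.\ $x$, so every ergodic harmonic measure arises in this way.

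The chief technical obstacle is the very first step: verifying that leafwise Brownian motion genuinely assembles into a Borel-measurable Markov process on the singular space $X$, requiring joint Borel measurability of $p_t(x,y)$ compatible with the transverse structure. This rests on the transverse continuity of $\Lam$ together with standard parabolic regularity of the heat kernel on each leaf. Once this foundational step is in hand, the remainder is a routine packaging of Birkhoff's theorem with a separability argument on $C(X)$.
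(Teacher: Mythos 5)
You should first note that the paper does not prove this statement at all: it is quoted verbatim as Garnett's ergodic theorem, attributed to \cite{Garnett:83} with the remark that a clean exposition is in \cite{Candel:03}. So there is no in-paper argument to compare against; what you have written is essentially a reconstruction of the standard Garnett--Candel proof, and the overall strategy (harmonic $=$ stationary for leafwise Brownian motion, lift to path space, Birkhoff for the shift on $(\Omega,P_\mu,\theta)$, push down by conditioning on $\omega(0)$, ergodic decomposition) is the right one and is how the cited sources proceed.

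Two steps in your sketch are doing more work than you acknowledge. First, the equivalence ``$\int \Delta_\Lam f\,d\mu=0$ for all test $f$'' $\Longleftrightarrow$ ``$D_t^{*}\mu=\mu$ for all $t$'' is not obtained by ``expanding $D_t$ as a power series in $\Delta_\Lam$'': that series is purely formal on $C(X)$ and does not converge as an operator identity. The actual argument (Garnett's, reproduced in \cite{Candel:03}) differentiates $t\mapsto\int D_t f\,d\mu$ and must justify that $D_t f$ remains an admissible test function (continuous, leafwise $C^2$, with the semigroup genuinely generated by $\Delta_\Lam$ on this class); establishing this regularity of the laminated heat semigroup is the main technical content of the equivalence and should be stated as a lemma rather than elided. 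Second, the theorem's notion of ``ergodic'' is ergodicity with respect to leaf-saturated sets (the holonomy pseudogroup), as defined in the paper just before the statement, whereas your Birkhoff/ergodic-decomposition argument produces ergodicity with respect to the \emph{diffusion}, i.e.\ triviality of sets $A$ with $D_t\mathbf{1}_A=\mathbf{1}_A$ $\mu$-a.e. Bridging these requires Garnett's lemma that for a harmonic measure every diffusion-invariant set agrees $\mu$-a.e.\ with a saturated set; the proof uses strict positivity of the leafwise heat kernel, so that $D_t\mathbf{1}_A(x)=\mathbf{1}_A(x)\in\{0,1\}$ forces $A\cap L_x$ to have either full or null leaf volume. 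Without this step the ``ergodic'' in your conclusion is not the ``ergodic'' in the statement. With those two lemmas supplied, the rest of your outline (separability of $C(X)$, telescoping for stationarity of $\tilde{\delta_x}$, and the converse via running the construction on an ergodic $\nu$) is routine and correct.
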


\begin{remark}
  By Theorem 7.3 of \cite{Candel:03} we may instead define $\tilde{\delta_x}$ by continuous
  time averages $\tilde{\delta_x}:=\lim_{T\to\infty}\frac1T\int_0^T D_t\delta_x dt$.
\end{remark}

\subsection {The  Borel action on the canonical $K$-bundle over $\Lam$}

Geometrically, we can realize the $\hat{\Lam}$ as a sub-bundle of the full tangential frame
bundle of $\Lam$ as follows. Let $F\Lam$ be the bundle of all orthonormal frames of tangent
vectors to the leaves of $\Lam$. In particular each fiber is homeomorphic to $O(n)$.  Since
an isometry is uniquely determined by its derivative action on a frame, $G$ acts freely on
$F\tilde{\L}$ for each leaf $\L\in \Lam$. The portion of the orbit of a single frame $F\in
F_x\tilde{\L}$ in the fiber over a point $x\in \tilde{\L}$ is precisely $K_x\cdot F$, and in
particular is homeomorphic to $K_x$. However, there are many such orbits in the fiber sitting
over a single point of the leaf $\tilde{\L}$; essentially, there is one for each unit vector
in a closed Weyl chamber modulo boundary identifications. In order to create a bundle on all
of $\Lam$ we need to choose a canonical section of reference fibers  along a transversal. We
do this as follows.

By transverse continuity (resp. transverse $\nu$-measurability) of the metric we may make a
transversely continuous (resp. transversely measurable) identification of $T_t\L$ with
$\mf{p}$ at each point $t$ the transversal such that the set of regular vectors and singular
vectors vary transversely in the same fashion. (Note that the entire singular set is
determined by the metric alone and does not depend on the any choices like that of the Cartan
subspace.) In particular the space of all Weyl chambers in $\mf{p}$ under the identification
vary in the same fashion. Hence the action of $G$ will be compatible with this choice of
section in the sense that whenever $t,t'\in T$ belong to the same leaf $\L$ and $v\in T_t\L$
and $v'\in T_{t'}\L$ are any two vectors which are identified to the same element of
$\mf{p}$, then the left invariant field of $v'$ relative to the identification at $t'$ has
value at $t$ which differs from $v$ by an element of $K_t$. This follows from the fact that
the action of $G$ and in particular of $\Gamma_\L$ on left invariant fields of $\tilde{\L}$
acts at any point $x$ by elements of $K_x$. Now we choose any frame $F_o\in\mf{p}$ and take
its $K$-orbit. By the identification along the section we obtain a $K_t$ fiber at each point
$t\in T$. Taking $G$-orbits this extends analytically to each $\tilde{\L}$. As explained
above, this descends to a bundle over the quotient $\L$ under the action of $\Gamma_\L$. By
compatibility of the section, the resulting bundle is a transversely continuous (resp.
measurable) sub-bundle of $F\Lam$. Thus we can realize $\hat{\Lam}$ as a sub-bundle of
$F\Lam$.

Note that this construction does not give the stronger statement of the
existence of a section $\sigma:\Lam\to F\Lam$. As in the case of a single leaf,
existence of sections requires the vanishing of certain characteristic classes. As
a simple example,  this above construction also applies to a single semisimple
group of compact type such as $G=SO(n+1)$ in which case $K=SO(n)$ and the
$K$ orbit is $S^n=SO(n+1)/SO(n)$. With $n=2$, for instance, the absence of
sections $\sigma:S^2\to T^1S^2 \cong FS^2$ is well known.

The \emph{laminated geodesic flow} is the flow $\phi_t$ that, restricted to the
unit tangent bundle of a leaf $\L$ of $\Lam$, coincides with the geodesic flow
in $T^1\L$.  If we identify the tangent space of $[K]\in G/K$ with the polar
subspace $\mf{p}$, the orthogonal complement of $\mf{k}$ in the Lie algebra
$\mf{g}$, then we can write general elements of $T^1\L=\Ga\bs T^1 (G/K)$ as
$\Ga g \cdot v$ for some $g\in G$ and $v\in \mf{p}$. Moreover, the geodesic
flow is given by the map $\Ga g\cdot v\mapsto
\Ga g\exp(tv)\cdot v$, see \cite{Bekka-Mayer:00b}. Except in the rank one case, we cannot
identify $T^1 (G/K)$ as a homogeneous quotient of $G$ itself, so we cannot
describe this flow by any (single) right action of a one parameter subgroup
$T<G$.

We can similarly define the \emph{laminated Weyl chamber flow} on
$\check{\Lam}$ and the \emph{laminated Borel action} on $\hat{\Lam}$ by
combining the corresponding leafwise actions on $\Ga_\L\bs G/M$ and
$\Ga_\L\bs G$, respectively. These are given by global actions consisting of
right multiplication by elements of $A$ on $\check{\Lam}$ and elements of
$B<G$ on $\hat{\Lam}$, respectively. The standard unipotent action given by
the restricted right action of $N<B$ on $\Ga\bs G$ generalizes the stable
horocycle flow on $\Ga\bs PSL(2,\R)= T^1(\Ga\bs\RH^2)$, for which case
$N\cong \R$. Both the Weyl chamber flow on $\check \Lam$ and the Borel
action on $\hat\Lam$ have the same regularity as the full right $G$-action on
$\hat\Lam$.

%as a consequence of Candel's theorem (see
%\cite{Candel1}).

\subsection{A measure invariant under the action of the affine group}

Let $\pi:\hat\Lam\rightarrow\Lam$ be the canonical projection, and consider
a harmonic probability measure $\mu$ on $\Lam$. In \cite{Martinez:06}, there
is a construction in the case of $G=\SL(2,\R)$ that produces a measure
$\hat\nu$ on $\hat\Lam$ which is invariant under the stable (or the unstable)
horocycle flow and such that $\pi_*\hat\nu=\mu$. In this section we will
construct $\hat\nu$ in a manner analogous to this construction, and it will
follow from the properties that $\hat\nu=\hat\mu$.

%It will be more convenient for us to consider the measure $\nu$ which is invariant under
%the \emph{unstable} horocycle flow and which projects onto $\mu$.

We consider $G$ as the particular sub-bundle of the full frame bundle
mentioned earlier. We begin by using a fixed point $p$. As in
\cite{Martinez:06}, two simplifying assumptions shall be made, that carry no
loss of generality:

\begin{enumerate}
\item That $\mu$ is ergodic; in particular, by Theorem \ref{thm:Garnett} there is an
    $p\in\Lam$ ($\mu$-almost any point will do) such that $\mu$
    arises as a limit of Krylov--Bogolyubov sums starting
    from an initial Dirac mass at $p$; and

\item that the point $p$ belongs to a leaf which is simply connected. (If it does not, we
    consider the universal cover of the leaf through $p$, as explained in
    \cite[pg.857]{Martinez:06}.)
\end{enumerate}

\subsection{The Construction of $\hat\nu$}

For any natural number $n\geq 1$, let $\delta^{(n)}_p$ be the
measure given by the following Krylov--Bogolyubov sum,
$$\delta_p^{(n)}=\frac{1}{n}\sum _{t=0}^{n-1} D_t\delta_p.$$
More prosaically, $\delta^{(n)}_p$ is the probability measure such that, for every
continuous function $f$ in $\Lam$,
$$\int_{\Lam} f\, d\delta^{(n)}_p =\frac{1}{n}\sum_{t=0}^{n-1}
D_tf(p)=\frac{1}{n}\sum_{t=0}^{n-1}\int _{L_p} p_t(p,y)f(y)dy.$$ With this notation, under
our assumptions we have $\mu=\lim_n \delta^{(n)}_p$ for a generic choice of $p$.

Consider the leaf $\L$ of $\Lam$ passing through $p$. By our
second assumption above, $\L$ is a global symmetric space
identified with $G/K_p$. Let $\mc{S}\subset \L$ denote the
(measure zero) set of points in $\L$ whose outward pointing
radial vector from $p$ is a singular vector in $T\L$ under this
identification. By convention, we also assume $p\in \mc{S}$. In
other words, $\mc{S}$ consists of the union of points in the
$p$-orbit of the chamber walls in all Cartan subgroups. Let
$\mc{V}:\L\setminus \mc{S}\rightarrow \check\Lam$ be the
``radial'' Weyl chamber field defined as follows. For each
point $x\in \L\setminus \mc{S}$ there is a unique unit speed
geodesic passing through $x$ starting from $p$ that ends at a
regular point $\xi(x)\in\bd \L$, where here $\bd L$ represents
the geodesic (ideal) boundary of $L$. Set $\mc{V}(x)\in
G/M\cong \check \L$ to be the unique Weyl chamber that, when
viewed as a subset of $\mf{g}$, has exponential image in
$G/K\cong \L$ with vertex at $x$ and $\xi(x)$ on its ideal
boundary. We remark that $\mc{V}$ does not extend continuously
to all of $\L$, but we can take a measurable extension which is
continuous almost everywhere.

% No only invariant under \bar{A^+} otherwise crosses singular set just like regular
% spherical field only invariant under positive geodesic flow.
% (negtive will cross giving opposite"Weyl Chamber" not outward
% vector)

Now using the Cartan decomposition $G=K_p \overline{A_\zeta^+}
K_p$ we can identify points in $\check\L\cong G/M_p^\zeta$ with
$K_p \overline{A_\zeta^+}K_p/M_p^\zeta$ so that any Weyl
chamber $\mc{C}$ with base point over $x=k a K_p\in G/K_p$ can
be written as $\mc{C}=k a k' M_p^\zeta$. (Note that $k \cdot
\xi=\xi(x)$.) The defining condition for $\mc{V}(x)$ is that
$k'=e$ in other words,  $\mc{V}(x)=k a M_p^\zeta$. Since
$M_p^\zeta$ is the centralizer of $A_\zeta$, $a_o\in
\overline{A_\zeta^+}$ acts on the right by $\mc{V}(x)a_o=k a
a_o M_p^\zeta=\mc{V}(xa_o)$ where $xa_o$ means the point
$xa_o=kaa_oK_p$. Here we have used that $\overline{A_\zeta^+}$
is closed under multiplication.
%In fact, all of $A_\zeta$ acts on the right of
%$G/M_p^\zeta$ and if $a_1\in A_\zeta$ is $a_1=w a_o$ for some
%$a_o\in \overline{A_\zeta^+}$ and some choice of representative
%$w\in M'$ of the unique Weyl transformation, then
%$\mc{V}(x)a_1= k a wa_o M_p^\zeta= k w a a_o  M_p^\zeta=\mc{V}(kw a
%a_o K_p)$. In other words $A_\zeta$ acts the same as
%$\overline{A_\zeta^+}$ except for a permutation of the chambers.
Consequently,  the action of $wa_o\in \overline{A_\zeta^+}$ sends the the section
$\mc{V}$ into, but not onto, itself. For since $a_o,a\in \overline{A_\zeta^+}$, the
distance of the basepoint of the Weyl chamber to $p$ is always increased by $d(p,a a_o
p)-d(p,a p)\geq 0$, where $d$ is the distance function on $L$.

This expansive action generalizes the positive geodesic flow
acting on the unit radial vector field emanating from $p$ in
the rank one case. Note that it is well defined and continuous
only because $\mc{V}$ is defined on $\L\setminus \mc{S}$.
% Note that if we extend $\mc{V}$ to be lower semicontinuous on
%all of $\L$, the equivariance of $\mc{V}$ under the, now
%discontinuous along $\mc{S}$, right action of
%$\overline{A_\zeta^+}$ is still preserved.
For convenience we shall extend $\mc{V}$ to the measure zero
set $\mc{S}$ to obtain a measurable section
$\mc{V}:\L\to\check\L$. Now that we have the frame field
$\mc{V}$, we are ready to construct $\hat\nu$.

%If we denote the algebraic barycenter of $A_\zeta^+$ by $\rho=\sum_{\Lambda^+}$, then
%we can choose an arbitrary reference frame The initial vector will be the unit radial
%vector field pointing outwards from $p$. Choose a fixed reference frame $F_{x_o}$ at the
%algebraic barycenter of the root vectors $x_o\in A^+$ whose initial vector is radial. We
%define the frame $F_x$ at any other point $x\in A^+$ with $d(p,x)=d(p,x_o)$ by taking
%the initial vector to be the radial one, and translating $F_{x_o}$ to $y$, projecting it to
%the tangent space of the entire sphere centered at $p$ through $x$ and then finally
%renormalizing via the Gram-Schmidt process to obtain a frame. This is well defined since
%$A^+\cap S(p,d(p,x_o))$ lies strictly in a hemisphere centered at $x_o$. Now we extend
%this frame to the rest of $A^+$ by parallel translate along the radial geodesics (linear
%rays). Next we extend this to all of $L_p$ by taking the $K$-orbit.

%. ; i.e. $\mc{V}(y)=(\gamma(s),\dot\gamma(s))$ if $\gamma$ is the unit speed geodesic such
%that $\gamma(0)=x$ and $\gamma(s)=y$. (Here $s$ is the distance from $x$ to $y$
%measured on $L_p$.)

Define $\mu_n=\mc{V}_*\delta^{(n)}_p$, where recall we are treating
$\mc{V}$ as a mapping from $\L$ to $\check \L$. This gives a sequence of
probability measures on the compact space $\check\Lam$. Let $\check\nu$ be
any limit point of the sequence $(\mu_n)$ in the sense of the weak-* topology.
(It is not difficult to see that in fact $\check\nu$ is the unique limit of the
$\mu_n$, but this is unessential for our argument and will follow anyhow from
the properties after the fact.) Finally we define $\hat\nu$ to be locally the
product $d\hat\nu=d\check\nu\times dm_{M}\times d\sigma$ where
$\sigma$, as before, is the transversal measure of the original harmonic
measure $\mu$ and $m_M$ represents the Haar measure on each of the fibers
of the fibration $\hat\Lam\to
\check\Lam$ which are canonically identified with $M$.

%Then, as was proved in \cite{Martinez}, $\nu$ is invariant under the unstable horocycle
%flow. And $\pi_*\nu=\mu$ since $\pi_*$ is a continuous map from the space of finite
%measures on $\hat\Lam$ to that of finite measures on $\Lam$.

\begin{theorem}\label{thm:same}
We have $\hat\nu=\hat\mu$.
\end{theorem}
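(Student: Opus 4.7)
The plan is to invoke the uniqueness half of the Main Theorem: since $\hat\mu$ is characterized as the unique right $B$-invariant measure on $\hat\Lam$ lifting $\mu$, it suffices to verify (i) $\pi_*\hat\nu = \mu$ and (ii) $\hat\nu$ is right $B$-invariant. The factor $dm_M$ built into the local product $d\hat\nu = d\check\nu\times dm_M\times d\sigma$ immediately handles $M$-invariance, so the real work is the projection identity together with invariance under $A$ and $N$.

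For the projection identity, I would factor $\pi:\hat\Lam\to\Lam$ through $\check\Lam$. Fiberwise integration against $dm_M$ gives that the pushforward of $\hat\nu$ to $\check\Lam$ equals $\check\nu$. Because $\mc{V}$ is a section of $\check\Lam\to\Lam$ off the measure-zero set $\mc{S}$, the pushforward of $\mu_n$ from $\check\Lam$ to $\Lam$ equals $\delta_p^{(n)}$ for every $n$. Continuity of pushforward in the weak-$\ast$ topology, combined with $\delta_p^{(n)}\to\mu$ coming from Theorem \ref{thm:Garnett} and the ergodicity reduction, then yields $\pi_*\hat\nu = \mu$.

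For the $A$- and $N$-invariance of $\hat\nu$, I would use the explicit equivariance $\mc{V}(xa) = \mc{V}(x)\cdot a$ for $a\in\overline{A^+_\zeta}$ established just above the construction, together with the asymptotic properties of the heat semigroup. The right $A$-action on $\check\Lam$ translates basepoints of Weyl chambers along their own interior directions; by the equivariance, its effect on $\mu_n$ differs from $\mu_n$ by a telescoping correction of order $1/n$ that vanishes in the Krylov--Bogolyubov average as $n\to\infty$. For $N$-invariance one uses that $N = N_\zeta$ fixes the regular boundary point $\zeta$ and that the ideal-direction distribution of $\check\nu$, coming from the Brownian endpoints $\xi(x)$, is governed by the Poisson-kernel representation derived in \eqref{eq:harmonic}.

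The main obstacle I foresee is rigorously verifying $N$-invariance in higher rank, since the arguments of \cite{Martinez:06, Bakhtin-Martinez:08} exploit one-dimensional horocycle geometry that does not extend verbatim. A cleaner route, which I would probably pursue, is to bypass any direct invariance verification: identify the conditional disintegration of $\hat\nu$ along the $N_\zeta\rtimes A_\zeta$-orbits of $\hat\Lam$ with the explicit orbital structure appearing in \eqref{eq:harmonic}. This would exhibit $\hat\nu$ in exactly the canonical $B$-invariant form constructed in Section \ref{sec:proof}, whence $\hat\nu = \hat\mu$ by the uniqueness already established.
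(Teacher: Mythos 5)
Your overall strategy---verify $\pi_*\hat\nu=\mu$ together with right $B$-invariance of $\hat\nu$ and then invoke the uniqueness half of the Main Theorem---is exactly the paper's, and the projection identity and the $M_p^\zeta$-invariance (from the $dm_M$ factor) are handled correctly. The gaps are in the two substantive invariances. For $A$-invariance, the proposed ``telescoping correction of order $1/n$'' does not work: right translation by $a_o=\exp(H_o)$ moves the radial part of $\mc{V}_*D_t\delta_p$ in the fixed direction $H_o$, whereas the heat diffusion at a later time $t+s$ is concentrated near $2\rho(t+s)$, i.e., it drifts in the direction of $\rho$ and simultaneously widens. Unless $H_o$ is proportional to $\rho$ (and even then only approximately), $\phi_{a_o}(\mc{V}_*D_t\delta_p)$ is not another term $\mc{V}_*D_{t'}\delta_p$ of the Krylov--Bogolyubov sum, so nothing telescopes. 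What is actually true, and what the paper must prove as Proposition \ref{prop:decay} using the Anker--Ostellari estimate \eqref{eq:heat_bound}, is that each individual measure becomes asymptotically $a_o$-invariant: $\int_{A_\zeta^+}\abs{u_t(a)-u_t(aa_o)}\,dm_{A_\zeta}(a)\to 0$ because $u_t$ concentrates on a shell of width roughly $\sqrt{t}$ about $2\rho t$, against which a bounded translation is negligible. This analytic input cannot be bypassed.

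For $N$-invariance you correctly flag the difficulty, but the proposed ``cleaner route''---identifying the disintegration of $\hat\nu$ along $N_\zeta\rtimes A_\zeta$-orbits with the canonical form of Section \ref{sec:proof}---is a restatement of the claim rather than an argument: exhibiting that disintegration is precisely what proving $N\rtimes A$-invariance amounts to, and appealing to the representation \eqref{eq:harmonic} (which describes $\hat\mu$, the object you are trying to match) for properties of $\hat\nu$ risks circularity. The missing idea in the paper's proof is a degeneration argument: since $p_t(p,\cdot)$ is $K_p$-radial, the conditional measures of $\hat\nu$ on the orbits $K_p\cdot a\cdot\mc{V}(p)$ are Haar measures on $K_p$; as $a\to\infty$ in $A_\zeta^+$ the conjugates $a^{-1}K_p a$ converge to $M_p^\zeta N_\zeta$, so these conditionals converge to the bi-invariant measure on $M_p^\zeta N_\zeta$, which is right $N_\zeta$-invariant. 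Without this step, or a genuine substitute for it, the $N$-invariance of $\hat\nu$ remains unproved.
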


%In particular, this theorem says that $\nu$ is invariant under the geodesic flows.
%Therefore, any harmonic measure is the projection of a measure invariant under both
%flows. This statement and its converse which is quoted in the Introduction constitute our
%Main Theorem.

\begin{proof}
First note that $\pi_*\hat\nu=\mu$ since $\mc{V}$ is a section, and under our
assumptions $\mu=\lim_n \mu_n$.

It remains to show that $\hat\nu$ is invariant under the full right action by the minimal
parabolic subgroup $B=N_\zeta A_\zeta M_p^\zeta$. The measure $\hat\nu$ is right
$M_p^\zeta$ invariant since the Haar measures are, and the right action of this group is
trivial on
$G/M$ and thus on $\check\nu$. % this is the maximal subgroup of $K_p$ that fixes the
%$A^+$ that appeared in the definition of $\mc{V}$. From the construction of $\mc{V}$, it
%is held invariant under this right action.

Next we show that $\hat\nu$ is invariant under the right action by $N_\zeta$. We recall
that $\mc{V}$ is invariant under the left action by $K_p$. Observe that at a point $g\in
G$, the $K_p$ orbit $K_pg$ has tangent space at $g$ which is $(R_g)_*\mathfrak{K}_p$
and the pullback of this to the identity is $(L_g)^*(R_{g})_*\mathfrak{K}_p$ which is the
tangent space at the identity to the conjugate stabilizer group $K_{g^{-1}p}$. This group
acting on the left on $p$ has orbit which is contained in the  sphere of radius $d(p,gp)$
in $G/K$. In fact this orbit is the entire sphere in rank one and has codimension one less
than the rank in general. Setting $A^+_\zeta=\exp(\mf{a}^+_\zeta)$, take $g=a_\zeta\in
A^+_\zeta$. As $a_\zeta$ tends to infinity, which is to say that $a_\zeta\to \xi\in \bd \L$,
the spheres centered at $gp$ of radius $d(p,gp)$ limit to the horosphere based at $p$
tangent to $\xi$. Both these orbits and the left orbit of $N_\zeta$ admit the same
transversals in the flat, but to show that the orbits converge one notes that the
homogeneous spaces $K_{g^{-1}p}/M_{g^{-1}p}^\zeta$ converge to $N_\zeta$ since the
quotient spaces of the subalgebras converge to the subalgebra $\mf{n}_\zeta$ in
$\mathfrak{g}$. In other words $a_\zeta^{-1} K_p a_\zeta$ tends to $M_p^\zeta N_\zeta$.
In particular, $a_\zeta^{-1} K_p a_\zeta M_p^\zeta=a_\zeta^{-1} K_p M_p^\zeta
a_\zeta=a_\zeta^{-1} K_p a_\zeta$ tends to $M_p^\zeta N_\zeta M_p^\zeta=M_p^\zeta
N_\zeta$. However, it is not the case that for any particular $k\in K_p$ there is an $n\in
N_\zeta$ such that $a_\zeta^{-1} k a_\zeta M_p^\zeta$ tends to $M_p^\zeta n$ since the
right $N_\zeta$ action does not preserve $M_p^\zeta$ cosets. (One can see this easily by
noting the difference in the dimensions of the left orbit of $p$ by these cosets in $G/K$.)

Since the heat kernel is $G$-equivariant, $p_t(gp,hp)=p_t(p,g^{-1}hp)$, and using the
Cartan decomposition $G=K_p\overline{A^+}K_p$ we may write $p_t(p,kak'\dot
p)=p_t(p,ka\cdot p)$. Moreover on a symmetric space $p_t(p,\cdot )$ is symmetric under
the action of $K_p$, so that $p_t(p,ka\cdot p)=p_t(p,a\cdot p)$ (see \cite{Anker-Ji:99}). In
other words, the heat kernel just depends on $a\in A^+_\zeta$. Hence the conditional
probability measures $\delta^{(n)}_p$ on any left $K_p$ orbit in $G/K$ are just the
projections of the unit Haar measures on $K_p$. The lift under $\mc{V}$ to $G/M$ does
not change this. In particular, the conditional measures for $\mu_n$ on $K_p\cdot
 a\cdot\mc{V}(p)$ do not depend on $n$ and so we denote them by $\alpha_a$. We
observe that these also coincide with the conditional probability measures of
$\mc{V}_*D_t\delta_p$ for all $t\geq 0$. In fact the $\alpha_a$ do not really
depend on $a$ either, except to indicate the particular orbit they live on.

Since we lift these to $G$ by the bi-invariant measure on $M_p^\zeta$, the radial
conditional measures of $\hat\nu$ are the Haar measures on $K_p$. Thus their limit
under conjugation by $a_\zeta$ is also the bi-invariant measure on the space
$M_p^\zeta N_\zeta$. This measure restricts to the (bi-invariant) Haar measures under
both decompositions $M_p^\zeta N_\zeta$ and $N_\zeta M_p^\zeta $ of this space. In
particular, these conditional measures are right $N_\zeta$ invariant, and thus so is
$\hat\nu$.

% This is equivalent to
%Now looking at the image under $\mc{V}$ of these orbits, one further lifts them to $G$
%by taking the saturation by right $M$ orbits. In $G$ the left orbit of $N_\zeta$ of an $M$
%coset of a lift in the image of $\mc{V}$ is identical to the right $N_\zeta$ orbit since
%$\mc{V}(p)$ at these points is nearly the "outward pointing" Weyl chambers normal to
%the horosphere containing the orbit $N_\zeta$. This agreement improves on ever larger
%sets the closer $g^{-1}p$ becomes to $\zeta$.

%In other words, the right orbit $g\cdot N_\zeta$ limits tangentially to the left orbits
%$K_p\cdot g$ as $gp\to \xi$. The left invariance of $\mc{V}$ by $K_p$ implies that
%asymptotically $\mc{V}$ is right $N_\zeta$ invariant. Note that on any compact subset,
%$\mu_n$ vanishes as $n\to \infty$, and the push-forward $\nu$ only depends on the
%behavior of $\mc{V}$ asymptotically in $G$.

%Now the groups
%$K_{gp}$, along with $N_\zeta$, are unimodular, and so these measures limit as above to
%the right Haar measure on the right $N_\zeta$ orbit. Combining this with the invariance of
%$\mc{V}$, it follows that $\nu$ is invariant under $N_\zeta$.

To finish the proof of Theorem \ref{thm:same} we need to show invariance of $\hat\nu$
under the right action of $A_\zeta^+$. Since the right action by $A_\zeta^+$ takes
$M_\zeta$ cosets to cosets, and preserves their Haar measures, it is enough to check the
condition on $\check\nu$.

%The conditional (probability) measures, $\alpha_{a}$, of $\mu_n$ under the
%decomposition of $\Gamma_{\L}\bs G/M$ into $K_p$ orbits of points $a\cdot
%\mc{V}(p)$ for $a\in A_\zeta^+$,  are the images of the normalized
%Haar measure on the orbits of $K_p$ which lie in the sphere $S(p,r)$ of radius
%$r=d(p,a)$.

We will consider an arbitrary element $a_o=\exp(H_o)\in A_\zeta^+$. We need to show
that $(\phi_{a_o})_*\nu=\nu$ where $\phi_{a_o}$ is simply the right action by $a_o$,
namely $R_{a_o}$ on $G/M$. (The purpose for the additional nomination $\phi_{a_o}$ is
merely to suggest the geodesic flow which it generalizes.) For this it is sufficient to
establish the following invariance under the heat flow.

For every continuous real-valued function $f$ on $\check\Lam$ and every $a_o\in
A_\zeta^+$,
\begin{align}\label{eq:diffuse}\tag{*}
\lim_{t\rightarrow +\infty}\left|\int_{\check\Lam}f\,
d(\mc{V}_*D_t\delta_p)-\int_{\check\Lam}f\circ \phi_{a_o}\,
d(\mc{V}_*D_t\delta_p)\right|=0.
\end{align}

Let $S_a=K_p\cdot a p$ denote the $K_p$ orbit of $a\cdot p$. Its normalized Lebesgue
measure can be pushed forward by $\mc{V}$ to obtain the measure $\alpha_a$,
supported on the manifold $\mc{V}(S_a)\subset \check\Lam$. However, since the
section $\mc{V}$ was constructed to be $A_\zeta$ equivariant, we may reduce the
verification of \eqref{eq:diffuse} to the corresponding condition on each conditional
measure of $\mu_n$. To determine this condition, we first note that we can write the
integral of $f$ with respect to $\mc{V}_*D_t\delta_p$ as
$$\int_{A_\zeta^+} u_t(a)\left(\int_{\mc{V}(S_a)} f\, d\alpha_{a} \right) \,
dm_{A_\zeta}(a)=\int_{A_\zeta^+} u_t(a)\left(\int_{K_p} f(k a \mc{V}(p))\, dm_{K_p}(k)
\right) \, dm_{A_\zeta}(a),$$ where $u_t(a)=p_t(p,a)\times \vol(S_a)$ for any point $a\in
A_\zeta^+$ and $dm_{K_p}$ is the probability Haar measure on $K_p$. (Recall that the
heat kernel $p_t(p,\cdot)$ only depends on $a\in A_\zeta^+$.) For $a_o\in A_\zeta^+$,
the flow $\phi_{a_o}$ takes $\mc{V}(S_a)$ to $\mc{V}(S_{aa_o})$, and
$(\phi_{a_o})_*\alpha_{a}=\alpha_{aa_o}$. Recall that the function $f$ on $\Lam$ is
necessarily bounded. Writing $g(a)=\int_{K_p} f(k \cdot a\cdot \mc{V}(p))\,
dm_{K_p}(k)$, and making use of the change of variables $a\mapsto aa_o^{-1}$, the
condition \eqref{eq:diffuse} follows from:

For every continuous bounded function $g:A_\zeta^+\rightarrow\R$ and every $a_o\in
A_\zeta^+$,
\begin{align}\label{eq:diffuse2}\tag{**}
\lim_{t\rightarrow\infty}\left|\int_{A_\zeta^+} u_t(a)g(a)\,dm_{A_\zeta}(a) - \int
_{a_oA_\zeta^+} u_t(aa_o^{-1})g(a)\, dm_{A_\zeta}(a)\right|=0.
\end{align}

Note that $A_\zeta^+$ may be decomposed, modulo measure zero sets,  into the disjoint
union $\overline{A_\zeta^+}=\(\cup_\alpha S_\alpha\)\cup a_o A_\zeta^+$ where
$\alpha$ runs over the faces of a closed Weyl chamber and the $S_\alpha$ are slabs of
the form,
$$S_\alpha=\exp\(\set{s H_o+\mf{F}_\alpha\,:\,s\in[0,1]}\)$$
where $a_o=\exp H_o$ and $\mf{F}_\alpha\subset \pa \overline{\mf{a}_\zeta^+}$ is a
face of the Weyl chamber.

Applying this decomposition and translating the domain $a_oA_\zeta^+$, the equality
\eqref{eq:diffuse2}  becomes:

\begin{align}
\lim_{t\rightarrow\infty}\left|\int_{A_\zeta^+} (u_t(aa_o)-u_t(a))g(aa_o)\,dm_{A_\zeta}(a)+
\int _{\cup_\alpha S_\alpha} u_t(a)g(a)\, dm_{A_\zeta}(a)\right|=0.
\end{align}

Finally this claim and the theorem, follow from the following proposition.
\end{proof}

\begin{proposition}\label{prop:decay}
For any $a_o\in A_\zeta^+$,
\begin{equation}
\label{eq:closeness_of_densities}
\lim_{t\rightarrow\infty}\int_{A_\zeta^+}|u_t(a)-u_t(aa_o)|dm_{A_\zeta}(a)=0.
\end{equation}
Moreover, the integral of $u_t$ on any slab $S_\alpha$ vanishes in the limit as
$t\to
\infty$.
\end{proposition}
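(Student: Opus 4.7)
My plan is to treat $u_t$ as the radial density of Brownian motion on $G/K$ and deduce both statements from the sharp heat kernel asymptotics on symmetric spaces of noncompact type, as referenced by equation \eqref{eq:heat_bound} and as established in \cite{Anker-Ji:99}.

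First I would set up the right picture. Writing $a=\exp H$ for $H\in \overline{\mathfrak{a}^+_\zeta}$, the polar-coordinate Jacobian gives $\vol(S_a)=J(H):=\prod_{\alpha\in\Sigma^+}\sinh^{m_\alpha}\alpha(H)$, so $u_t(\exp H)=p_t(p,\exp H)\,J(H)$ is precisely the density (with respect to $dm_{A_\zeta}$) of the radial component of the heat-diffused measure at time $t$. In particular $\int_{A^+_\zeta}u_t\,dm_{A_\zeta}=1$. Inserting the Anker--Ji asymptotic for $p_t(p,\exp H)$ and the standard estimate $J(H)\asymp e^{2\langle\rho,H\rangle}\prod_\alpha(\alpha(H)\wedge 1)^{m_\alpha}$ and combining with the exponential factor $e^{-\|\rho\|^2 t-\langle 2\rho,H\rangle-\|H\|^2/(4t)}$ coming from the heat kernel, one sees that $u_t$ has a Gaussian profile on $\mathfrak{a}$ centered at $H\approx 2t\rho$ with width of order $\sqrt{t}$, modulated by a slowly varying polynomial factor coming from root multiplicities.

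With this profile in hand, both claims are quantitative Gaussian estimates. For the translation $L^1$-continuity, I would differentiate the asymptotic expression in $H$, observing that $\|\nabla_H\log u_t\|=O(1/\sqrt{t})$ on the bulk of the mass, and then write
\[
\int_{A^+_\zeta}|u_t(a)-u_t(aa_o)|\,dm_{A_\zeta}(a)\;\leq\;\int_0^1\!\!\int_{A^+_\zeta}|\nabla_H u_t|(H+sH_o)\,ds\,dH\;=\;O(\|H_o\|/\sqrt{t})\to 0,
\]
after taking care of the contribution from the boundary region $aA_\zeta^+\setminus a_oA_\zeta^+$, which lies in a translate of a chamber-wall neighborhood and contributes $o(1)$ by the same Gaussian decay argument. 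For the slab claim, a slab $S_\alpha$ has thickness $1$ in the $H_o$ direction and lies within bounded $\alpha$-distance of a face $\mathfrak{F}_\alpha$, whereas the bulk of $u_t$ has $\alpha(H)\gtrsim t$ for every positive root $\alpha$; the Gaussian tail integrated over $S_\alpha$ therefore decays faster than any polynomial in $1/t$.

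The main obstacle is arranging the cancellation of the non-translation-invariant Jacobian $J(H)$. A naive translation $H\mapsto H+H_o$ multiplies $J$ by $e^{2\langle\rho,H_o\rangle}(1+o(1))$, which does \emph{not} tend to $1$; however, the heat kernel ratio $p_t(p,\exp(H+H_o))/p_t(p,\exp H)$ produces exactly the compensating factor $e^{-2\langle\rho,H_o\rangle}(1+o(1))$ on the bulk region $\|H-2t\rho\|\leq t^{1/2+\epsilon}$, because the drift of Brownian motion is $2\rho$ (this is precisely the content of the Anker--Ji asymptotics near the spherical function $\varphi_0$). Once this balance is made explicit the pointwise ratio $u_t(aa_o)/u_t(a)$ tends to $1$ uniformly on the bulk, and the $L^1$ estimate follows by dominated convergence together with the negligible contribution of the Gaussian tails and the slab region handled in the second claim.
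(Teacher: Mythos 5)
Your proposal is correct and follows essentially the same route as the paper: both extract from the Anker--Ji/Anker--Ostellari estimates that $u_t$ is a Gaussian-type density of width $\sqrt{t}$ centered at $2\rho t$, show that the ratio $u_t(aa_o)/u_t(a)\to 1$ uniformly on the region carrying $1-o(1)$ of the mass (via exactly the cancellation you identify between the Jacobian factor $e^{2\langle\rho,H_o\rangle}$ and the heat-kernel drift), and dispose of the wall neighborhoods and slabs by the Gaussian tail. Two small caveats: your quoted exponent should be $e^{-\langle\rho,H\rangle}$ rather than $e^{-\langle 2\rho,H\rangle}$ (your subsequent centering at $2\rho t$ is consistent only with the correct exponent), and your alternative gradient/FTC bound would require a derivative estimate on the heat kernel beyond the pointwise asymptotics cited, whereas the ratio argument you also sketch --- and which the paper uses --- needs only the stated bounds.
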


\begin{proof}
We will express elements $a\in A_\zeta^+$ additively by $a=\exp(H)$ and
$a_o=\exp(H_0)$ for $H,H_o\in\mf{a}_\zeta^+$. The heat kernel, $p_t(p,gp)$ for $g=nak$
only depends on $a=\exp H$ so we write $p_t(p,gp)=p_t(a)$. Writing $f\asymp g$
whenever there is a universal constant $C\geq 1$ such that $\frac1C g\leq f\leq C g$,
the Main Theorem (3.1) of  \cite{Anker-Ostellari:03}, states that
\begin{align}\label{eq:heat_bound}
p_t(\exp(H))\asymp t^{-\frac{\ell}{2}}\scriptstyle{\left(\prod_{\alpha\in
\Lambda^{++}}\frac{1+\inner{\alpha,H}}{t}\(1+
\frac{1+\inner{\alpha,H}}{t}\)^{\frac{m_\alpha+m_{2\alpha}}{2}-1} \right)}
e^{-\abs{\rho}^2t-\inner{\rho,H}-\frac{\abs{H}^2}{4t}},
\end{align}
where $\ell=\dim A$ is the rank of $G$, $\Lambda^{++}\subset \Lambda^+$ is the set of
indecomposable positive roots, $\rho$ is the algebraic centroid of $\mf{a}_\zeta^+$
defined by $\rho=\frac12 \sum_{\alpha\in\La^+}m_\alpha H_\alpha$ where $H_\alpha$
are the root vectors with multiplicity $m_\alpha=\dim \mf{g}_\alpha$.

Furthermore,  one can obtain a sharp asymptotic estimate for  $p_t(\exp(H))$ when $H$
stays away from chamber walls. More precisely, Proposition 3.2 of \cite{Anker-Ji:01}
states that for any sequence $H_j\in \mf{a}^+$ such that $\inner{\alpha,H_j}\to \infty$ for
all $\alpha\in \La^{+}$ and any sequence $t_j\to\infty$, we have
\begin{align}\label{eq:heat_asymp}
\lim_{j\to\infty} \frac{1}{p_{t_j}(\exp H_j)}C_o t_j^{-\frac{\ell}{2}}\mbf{c}\(-\frac{i
H_j}{2t_j}\)^{-1}e^{-\abs{\rho}^2t_j-\inner{\rho,H_j}-\frac{\abs{H_j}^2}{4t_j}}=1,
\end{align}
where $C_o>0$ is a universal constant, and $\mbf{c}$ is the function of Harish-Chandra
which in this range satisfies the bounds,
\begin{align}\label{eq:c-poly}
\mbf{c}\(-\frac{i H}{2t}\)^{-1}\asymp \prod_{\alpha\in
\Lambda^{++}}\frac{1+\inner{\alpha,H}}{t}\(1+
\frac{\inner{\alpha,H}}{t}\)^{\frac{m_\alpha+m_{2\alpha}}{2}-1}.
\end{align}

Moreover, the volume of the $K_p$ orbit through $\exp(H)$ is
$$C_1\prod_{\alpha\in\Lambda^+}\sinh\(\inner{\alpha,H}\)^{m_\alpha}$$ for some
constant $C_1$ that depends on the normalization of the Haar measure used. As
$\abs{H}$ increases this rapidly approaches $C_1e^{2\inner{\rho,H}}$. So after
multiplying $p_t$ by the volume in the restricted regime, we obtain the asymptotic,
\begin{align*}
u_t(\exp H)\sim& C t^{-\frac{\ell}{2}}\mbf{c}\(-\frac{i
H}{2t}\)^{-1}e^{-\abs{\rho}^2t+\inner{\rho,H}-\frac{\abs{H}^2}{4t}}=\\
&C t^{-\frac{\ell}{2}}\mbf{c}\(-\frac{i H}{2t}\)^{-1}e^{-\frac{\abs{H-2\rho t }^2}{4t}},
\end{align*}
where $C>0$ is the combined universal constant. Similarly from \eqref{eq:heat_bound},
in all cases we have the bounds,
\begin{align*}
u_t(\exp H)\asymp  t^{-\frac{\ell}{2}}\scriptstyle{\left(\prod_{\alpha\in
\Lambda^{++}}\frac{1+\inner{\alpha,H}}{t}\(1+
\frac{1+\inner{\alpha,H}}{t}\)^{\frac{m_\alpha+m_{2\alpha}}{2}-1} \right)}e^{-\frac{\abs{H-2\rho t }^2}{4t}}.
\end{align*}
This estimate shows that for $\abs{H}$ large and fixed, $u_t(\exp H)$ is maximized when
$H$ is nearly in the direction of $\rho$. Moreover, $u$ roughly decreases at least until
$H$ comes within a factor of $\log \abs{H}$ of the Weyl chamber walls.

Now denote by $\mf{w}(R)\subset \mf{a}^+_\zeta$ the complement in $\mf{a}^+$ of the
set $\mf{a}^++R\rho$. Hence, $W(R)=\exp(\mf{w}(R))$ is the complement of the set
$\set{\exp(H+R\rho)\,:\, H\in\mf{a}^+}$ and we set
$A(R)=A^+_\zeta-W(R)=\exp(\mf{a}^++R\rho)$. From the definition of $\rho$, there are
constants $c_\alpha$ such that $H$ belongs to $\mf{w}(R)$ if and only if
$\inner{\alpha,H}<c_\alpha R$ for some $\alpha\in \La^+$.

Now we recall two important properties of the heat kernel, the
heat kernel uniformly tends to zero on compacta as $t\to
\infty$.)): $\int_{A_\zeta^+} u_t(a)dm_{A_\zeta}(a)=1$
independent of $t$, and $u_t$  tends to $0$ uniformly on
compacta (Theorem VIII.8 of \cite{Chavel84} and using that G/K
is a Hadamard space). Since the ratio $\frac{\vol W(R) \cap
B(x,r)}{\vol A^+_\zeta \cap B(x,r)}$ tends to $0$ as $r\to
\infty$ for each fixed $R>0$, the small scale average
comparison above implies that,
$$\lim_{t\to \infty}\int_{W(R)} u_t(a)dm_{A_\zeta}(a)=0.$$

Now we can compute
\begin{align*}
 \int_{A(R)}&|u_t(a)-u_t(aa_o)|dm_{A_\zeta}(a)=\\
 &\int_{A(R)}u_t(a)\abs{1-\frac{u_t(aa_o)}{u_t(a)} }dm_{A_\zeta}(a)=\\
 &\int_{A_\zeta^+}u_t(\exp(H+R\rho))\abs{1-C(R)\frac{\mbf{c}\(-i\frac{
H+H_o+R\rho}{2t}\)^{-1}e^{-\frac{\abs{H+H_o-(2t-R)\rho  }^2}{4t}}}{\mbf{c}\(-\frac{i
H+R\rho}{2t}\)^{-1}e^{-\frac{\abs{H-(2t-R)\rho}^2}{4t}}} }dm_{A_\zeta}(a=\exp(H)).
\end{align*}
Here $C(R)$ tends to $1$ as $R\to\infty$.

% Since we are taking the limit as $t\to \infty$ and the heat kernel vanishes on compacta (\cite{Chavel84}\marg{where exactly?}),
%the limit of the integral is the same if we remove any fixed
%compact set from $A_\zeta^+$.
Now as shown in \eqref{eq:c-poly} above $\mbf{c}^{-1}$ has uniform polynomial growth
in each $\inner{\alpha,H}$ and hence for $\abs{H}>>\abs{H_o}$ and $R>>0$ we have
that
$$C(R)\frac{\mbf{c}\(-i\frac{ H+H_o+R\rho}{2t}\)^{-1}}{\mbf{c}\(-\frac{i
H+R\rho}{2t}\)^{-1}}$$ is very close to $1$.

Putting this together and expanding the quadratic terms and making obvious
cancelations yields,
\begin{align*}
 \lim_{t\to\infty}\int_{A_\zeta^+}&|u_t(a)-u_t(aa_o)|dm_{A_\zeta}(a)=\\
 & \lim_{t\to\infty}\int_{A_\zeta^+}u_t(\exp(H))\abs{1-e^{-\frac{\inner{H,H_o}}{2t}
 +\inner{\rho,H_o}-\frac{\abs{H_o}^2}{4t} } }dm_{A_\zeta}(\exp(H))=\\
& \lim_{t\to\infty}\int_{A_\zeta^+}u_t(\exp(H))\abs{1-e^{-\frac{\inner{H,H_o}}{2t}
+\inner{\rho,H_o}} }dm_{A_\zeta}(\exp(H)).
\end{align*}

Finally the Gaussian form of $u_t$ implies that for $t$ sufficiently large that there is a
$\delta(t)$, tending to $0$ as $t\to\infty$, and an $\eps(t)$ also tending to $0$ as
$t\to\infty$ such that $1-\delta(t)$ of the mass of $u_t(\exp(H))$ is contained in the
$\sqrt{t}^{1+\eps(t)}$ ball centered at $2\rho t$, i.e.
$$\abs{H-2\rho t}<\sqrt{t}^{1+\eps(t)}.$$
However, writing $H=H'+2\rho t$ we have
$$1-e^{\inner{\rho,H_o}-\frac{\inner{H,H_o}}{2t}}=1-e^{-\frac{\inner{H',H_o}}{2t}}.$$
Furthermore, this function is on the order of at most
$\frac{1}{\sqrt{t}^{1-\eps(t)}}$ on this ball. Hence the limit of the integral
vanishes since $u_t$ is positive with unit mass.

For the last statement, note that the slab $S_\alpha$ is the exponential image of a set
contained within the Euclidean $\delta$-neighborhood of $\mf{F}_\alpha$ with width
$\delta$ at most $\abs{H_o}$. On the other hand, $H_o+ \mf{a}_\zeta^+\subset
\mf{a}_\zeta^+$ is at finite Hausdorff distance from $\mf{a}_\zeta^+$. Hence for any
$\eps>0$ and any sequence $t_i\to\infty$ the exponential image of the ball of radius
$\sqrt{t_i}^{1+\eps}$ centered at $2\rho t_i$, eventually does not even intersect
$S_\alpha$.  Since the integral of $ u_{t_i}$ on all of $A_\zeta^+$ is one, and $u_{t_i}$ is
almost entirely supported on this ball, the value of $u_t$ on $S_\alpha$ vanishes as
$t\to\infty$.
\end{proof}

%Section 2 is devoted to the proof of this theorem. $\square$

%\begin{remark} Theorem 1 says that any harmonic measure on $\Lam$ is the
%projection of a measure invariant under the action of the lower triangular group in
%$\hat\Lam$. Of course we could do the same thing for the upper triangular group,
%considering the unit inward radial vector field on $L_p\backslash \{x\}$, and then taking
%$s$ in $[-1,0]$.\end{remark}

%%%%%%%%%%%%%%%%%%%%%%%%%%%%%%%%%%%%%%%%%%
%%%%%%%%%%%%%%%%%%%%%%%%%%%%%%%%%%%%%%%%%%
%%%%%%

\section{Applications and Related Results}\label{sec:examps}

%%%%%%%%%%%%%%%%%%%%%%%%%%%%%%%%%%%%%%%%%%
%%%%%%%%%%%%%%%%%%%%%%%%%%%%%%%%%%%%%%%%

As a simple application of Theorem \ref{thm:main}, we prove unique ergodicity
for the action of $B<G$ on the frame sub-bundle $\hat\Lam$ of certain
foliations by locally symmetric spaces $\Gamma_\L\bs G/K$.  We will need the
following recent results on unique ergodicity for harmonic measures.

\subsection{Transversely conformal foliation by symmetric spaces}

A (transversely) conformal foliation of codimension $q$ is essentially a foliation
whose local holonomy maps are restrictions of conformal maps of the sphere
$S^q$, where local transversals have been identified with subsets of the sphere.
(For precise definitions see \cite{Vaisman:79}.) This is a large class of foliations
including all codimension one foliations and complex codimension one
holomorphic foliations. In \cite{Deroin-Kleptsyn:07}, Deroin and Kleptsyn
prove the following:

\begin{theorem*}
Let $(M,\Fc)$ be a compact manifold together with a transversely conformal
foliation and Riemannian metrics on the leaves that vary continuously in $M$.
Let $\mathcal{M}$ be a minimal set for $\Fc$ (that is, a closed saturated subset
of $M$ where all leaves are dense.) If $\mathcal{M}$ has no transverse
holonomy-invariant measure, then it has a unique harmonic measure.
\end{theorem*}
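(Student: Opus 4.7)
My plan is to translate uniqueness of harmonic measures on $\mathcal{M}$ into uniqueness of stationary measures for a random walk on a transversal, then exploit the transversely conformal structure to define a single scalar Lyapunov exponent and argue by dichotomy: either the exponent is strictly negative (in which case contraction of the walk forces uniqueness), or it vanishes (in which case rigidity of the conformal group produces a transverse holonomy-invariant measure, which is excluded by hypothesis).

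First I would set up the random walk. Combining Garnett's theorem (Theorem~\ref{thm:Garnett}) with the local flow-box decomposition $d\mu = h(x,t)\,\dvol(x)\times d\sigma(t)$ of any harmonic measure, harmonic probability measures on $\mathcal{M}$ correspond bijectively to stationary probability measures for the natural diffuse-then-project random walk on a transversal: run a leafwise Brownian motion for unit time and record the new transversal position via holonomy. Since $\mathcal{M}$ is a compact minimal set, this reduces the theorem to showing that, under the hypothesis, this random walk admits a unique stationary measure.

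Next I would use the conformal transverse structure. Identifying transversals with open subsets of $S^q$ via the conformal atlas, any holonomy map $h$ has derivative which, viewed as a conformal linear map, is a positive scalar times an isometry; consequently $\log\|Dh\|$ defines a genuine real-valued additive cocycle over the random walk. Fix any ergodic stationary measure $\nu$ and let $\lambda(\nu)\in\R$ denote its Lyapunov exponent for this cocycle. The heart of the argument is the following dichotomy. If $\lambda(\nu) < 0$, the random walk almost surely contracts on the transversal, and a standard coupling/martingale argument shows that two distinct ergodic stationary measures cannot coexist on the same minimal set. If instead $\lambda(\nu)\geq 0$, a Furstenberg--Ledrappier invariance principle combined with the rigidity of the conformal group of $S^q$ (whose isometric part is essentially $\mathrm{O}(q+1)$) produces a transverse holonomy-invariant measure in the class of $\nu$. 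The hypothesis excludes the latter alternative, so only the contracting case occurs, and uniqueness follows.

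The main obstacle I foresee lies in the second horn of the dichotomy: showing that $\lambda(\nu)\geq 0$ forces existence of a \emph{transverse} holonomy-invariant measure, rather than merely a ``neutral but not invariant'' zero-exponent cocycle. Making this precise typically uses the fact that leafwise Brownian paths almost surely converge on the geometric boundary of the leaf together with the strong rigidity of the conformal group of $S^q$, so that zero drift for the log-derivative cocycle forces the stationary measure to be, after passing to a conformal gauge, holonomy-invariant. A secondary technical issue is upgrading the contraction in the negative-exponent case to genuine global uniqueness (rather than $\nu$-a.s.\ uniqueness), which requires Harnack-type estimates and minimality of $\mathcal{M}$ to propagate convergence from generic to arbitrary initial transversal points.
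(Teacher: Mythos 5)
A point of orientation first: the paper does not prove this statement. It is quoted verbatim from Deroin and Kleptsyn \cite{Deroin-Kleptsyn:07} and used as a black box to deduce the corollary on unique ergodicity of the $B$-action on $\hat{\mc{M}}$. So there is no internal proof to compare against; your attempt has to be measured against the actual argument of \cite{Deroin-Kleptsyn:07}. At the level of strategy you have reconstructed that argument correctly: discretize the leafwise Brownian motion to a holonomy random walk on a transversal, use the conformal structure to reduce the transverse derivative to a scalar cocycle $\log\|Dh\|$ (well defined up to a bounded coboundary, so with unambiguous exponent), and run the dichotomy ``negative exponent $\Rightarrow$ contraction $\Rightarrow$ uniqueness'' versus ``non-negative exponent $\Rightarrow$ transverse invariant measure.''

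As a proof, however, the proposal has two gaps, and they are precisely the two theorems that constitute the content of \cite{Deroin-Kleptsyn:07}. (i) The implication ``$\lambda(\nu)\ge 0$ forces a transverse holonomy-invariant measure'' is not a routine invocation of an invariance principle: one must show that absence of contraction makes the holonomy pseudogroup on $\mathcal{M}$ equicontinuous along typical Brownian itineraries, and then use the structure of $\mathrm{Conf}(S^q)\cong O(q+1,1)$ (a family of conformal maps with no contraction either stays in a compact part of the group or degenerates to a constant map) to extract an invariant measure by averaging; you name the ingredients but supply none of this, and it is where all the difficulty lives. (ii) In the contracting case, a ``standard coupling/martingale argument'' does not directly yield uniqueness: a negative exponent gives only local exponential contraction along $\nu$-typical orbits, and one must upgrade this, using minimality of $\mathcal{M}$ and positivity/Harnack estimates for the leafwise heat kernel, to the statement that any two nearby transversal points are synchronized by a positive-measure set of Brownian paths, whence two distinct ergodic stationary measures cannot coexist. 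A smaller but real issue is the asserted bijection between harmonic measures and stationary measures of a unit-time transversal walk: this requires a discretization in the style of Garnett--Candel in which $D_1$-stationarity is shown to imply full harmonicity, which is true but is itself a step to be justified. In short, the skeleton is the right one, but both horns of the dichotomy are asserted rather than proved.
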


Combining this with our main result immediately yields:

\begin{corollary}
Let $(M,\Fc)$ be a compact manifold together with a transversely conformal
foliation by locally symmetric spaces $\Ga_\L\bs G/K$. Let $\mathcal{M}$ be a
minimal set for $\Fc$. If $\mathcal{M}$ has no holonomy-invariant measures,
then the action of $B$ on $\hat{\mc{M}}$ is uniquely ergodic.
\end{corollary}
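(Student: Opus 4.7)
The plan is to combine the cited Deroin--Kleptsyn theorem with Theorem \ref{thm:main} applied to the sub-lamination carried by the minimal set.

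First, I would observe that the minimal set $\mathcal{M}$ is closed and saturated in $M$, hence compact and carrying its own (sub)lamination $\Lam_{\mc{M}}$ by locally symmetric spaces $\Gamma_\L\bs G/K$ inheriting the transverse regularity and leafwise Riemannian structure from $\Fc$. Since the hypothesis is that $\mathcal{M}$ admits no transverse holonomy-invariant measure, the Deroin--Kleptsyn theorem quoted just above applies to $(\mc{M},\Lam_{\mc{M}})$ and produces a unique harmonic probability measure $\mu$ on $\mathcal{M}$.

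Next I would invoke the functoriality of the construction of $\hat\Lam$: the canonical $K$-bundle $\hat{\mc{M}}\to \mc{M}$ is exactly the restriction of $\hat{\Fc}$ to the pre-image of $\mc{M}$, and it carries a continuous right $G$-action, hence a continuous right $B$-action. Applying Theorem \ref{thm:main} to the compact laminated space $(\mc{M},\Lam_{\mc{M}})$, I obtain a bijective correspondence between $B$-invariant probability measures on $\hat{\mc{M}}$ and $\Delta_{\Lam_{\mc{M}}}$-harmonic probability measures on $\mc{M}$. Since the target set of this bijection is the singleton $\{\mu\}$, the source set must also be a singleton. In other words there is exactly one $B$-invariant probability measure on $\hat{\mc{M}}$, namely the canonical lift $\hat\mu$ produced in Section \ref{sec:lift}. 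This is precisely unique ergodicity of the right $B$-action on $\hat{\mc{M}}$.

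The only point that requires a moment of care, and which I would spell out, is that the Deroin--Kleptsyn and Theorem \ref{thm:main} hypotheses genuinely match: the phrase ``no holonomy-invariant measures'' in the corollary is the same condition as ``no transverse holonomy-invariant measure'' in the Deroin--Kleptsyn statement, and Theorem \ref{thm:main} was proved in the transversely continuous compact setting, which $\mc{M}$ inherits. Existence of at least one $B$-invariant measure on $\hat{\mc{M}}$ (needed so that ``uniquely ergodic'' is nonvacuous) is automatic either from amenability of $B$ acting on the compact space $\hat{\mc{M}}$, or from the bijection applied to the already-existing $\mu$. There is no real obstacle in the argument beyond confirming these compatibilities; the corollary is essentially a two-line consequence of the main theorem combined with the Deroin--Kleptsyn uniqueness result.
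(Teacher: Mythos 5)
Your proposal is correct and follows exactly the route the paper intends: the paper states the corollary as an immediate combination of the Deroin--Kleptsyn uniqueness theorem (applied to the minimal set, using the no-holonomy-invariant-measure hypothesis) with the bijection of Theorem \ref{thm:main} between $B$-invariant measures on $\hat{\mc{M}}$ and harmonic measures on $\mc{M}$. Your additional remarks on restricting the $K$-bundle to the saturated compact set $\mc{M}$ and on the existence of at least one $B$-invariant measure are exactly the compatibility checks the paper leaves implicit.
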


This generalizes the corresponding result for surfaces in
\cite{Bakhtin-Martinez:08}.

\subsection{Holonomy Invariant Measures}\label{sec:invariant_meas}

We now give the proof of Theorem \ref{thm:G-inv}, which we state again for
convenience.

\begin{theorem}[\cite{Muniz-Manasliski:09}]
There is a natural bijective correspondence between $G$-invariant measures on $\hat\Lam$ and
invariant transverse measures on $\Lam$.
\end{theorem}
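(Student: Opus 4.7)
The plan is to establish the bijection by exhibiting explicit mutually inverse constructions between the two classes of measures, each transparent at the level of the local product structure of a foliation chart.

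For the direction from invariant transverse measures to $G$-invariant measures, I would begin with an invariant transverse measure $\sigma$ on $\Lam$, so that $d\mu = d\vol_L \times d\sigma$ in each foliation chart defines the associated holonomy-invariant measure on $\Lam$. The corresponding lift $\hat\mu$ on $\hat\Lam$ is then defined locally by $d\hat\mu = d\vol_{\hat L} \times d\sigma$, where $\vol_{\hat L}$ is the Riemannian volume on each leaf of $\hat\Lam$, equivalently the descent of Haar measure under $\Gamma_\L \backslash G$. The chart transitions of $\hat\Lam$ factor as an isometry of $K$, an isometry of the underlying leaves in $\Lam$, and the same transverse holonomy $\tau$ appearing in $\Lam$; so the invariance of $\sigma$ under $\tau$ together with the bi-invariance of Haar measure on $G$ ensures $\hat\mu$ is globally well-defined, and its right $G$-invariance is immediate from the right $G$-invariance of Haar on each leaf.

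For the reverse direction, given a right $G$-invariant probability measure $\hat\mu$, I would disintegrate it in a foliation chart as $d\hat\mu(z,t) = d\nu_t(z)\, d\bar\sigma(t)$. Each conditional measure $\nu_t$ is supported on a plaque of a leaf $\hat L = \Gamma_\L \backslash G$ and inherits right $G$-invariance from $\hat\mu$. Since $G$ is unimodular and acts transitively on $\hat L$ by right translation, the right $G$-invariant measures on $\hat L$ are unique up to a scalar, so $\nu_t$ is proportional to $\vol_{\hat L}$. Absorbing these scalars into the transverse factor produces the local product form $d\hat\mu = d\vol_{\hat L} \times d\bar\sigma$, whose projection to $\Lam$ is $d\mu = d\vol_L \times d\bar\sigma$. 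Global well-definedness of $\hat\mu$ across overlapping charts, where only the transverse component $\tau$ of the transition fails to be an isometry, then forces $\bar\sigma$ to be invariant under the holonomy pseudogroup.

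By construction these two procedures are inverses of each other on the level of local chart data, so they yield the asserted bijection. The principal subtlety is the uniqueness step in the disintegration: one must verify that the leafwise conditional measures of a right $G$-invariant measure are proportional to the leafwise volume, which reduces to the standard fact that $G$-invariant measures on $\Gamma_\L \backslash G$ are unique up to scale. A conceptually different route would invoke Theorem \ref{thm:main} directly: any $G$-invariant measure is $B$-invariant, hence projects to a harmonic $\mu$ with local representation $d\mu = h(x,t) d\vol(x) d\sigma(t)$ and Poisson representing measures $\mu_t$ on $G/B$; the additional $K$-invariance of $\hat\mu$ (using $G = KAN$) forces $\mu_t$ to be the unique $K$-invariant probability measure, namely $m_{G/B}$, whence $h$ is constant and $\mu$ arises from a genuine invariant transverse measure.
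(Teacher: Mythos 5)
Your forward direction coincides with the paper's: both take the product of the invariant transverse measure with leafwise Haar measure on $\Gamma_\L\bs G$ and use unimodularity of $G$ to get right invariance. For the converse you take a genuinely different route. The paper disposes of it in two lines by invoking Connes' equivalence of the various definitions of a transverse invariant measure: a $G$-invariant $\hat\mu$ together with the leafwise Haar volume form $\omega$ gives a pair $[\omega,\hat\mu]$, which by \cite{Connes:82} is exactly the data of a holonomy-invariant transverse measure. You instead disintegrate $\hat\mu$ in a chart and argue that the plaque conditionals must be proportional to leafwise volume by uniqueness of right $G$-invariant measures on $\Gamma_\L\bs G$. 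This is more self-contained and more transparent, but the step you describe as ``inherits right $G$-invariance from $\hat\mu$'' is where the real work hides: the conditionals of a chart disintegration live on plaques, and right translation by $g\in G$ does not preserve plaques, so one must first show that the plaque conditionals glue (via a Radon--Nikodym cocycle argument over chart overlaps, using $G$-invariance of $\hat\mu$ for $g$ near the identity and connectedness of $G$) into a globally defined, $\sigma$-finite, genuinely $G$-invariant measure on each leaf before uniqueness on $\Gamma_\L\bs G$ can be applied. That local-to-global gluing is precisely what Connes' $[\omega,\mu]$ formalism packages, so your route trades the citation for a standard but nontrivial argument you have not written out; you flag the uniqueness step as the subtlety, but the gluing is the larger one. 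Your alternative route through Theorem \ref{thm:main} (right $K$-invariance forcing $\mu_t=m_{G/B}$, hence $h$ constant) is also viable and is consistent with the paper's own remark that $\mu_t$ a multiple of $m_{G/B}$ forces $h\equiv\norm{\mu_t}$, though it is cleaner to observe directly that a $G$-invariant measure restricts to Haar on each leaf, making the detour through the Poisson representation unnecessary.
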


\begin{proof}
We will establish a bijection $\Phi$ from the holonomy invariant harmonic
measures to right $G$-invariant measures. For a measure $\mu$ on
transversals of $\hat\Lam$ which is holonomy invariant, we set $\Phi(\mu)$
to simply be the product of  the (holonomy invariant) transverse conditional
measures of $\mu$ with the image of the left Haar measure under the discrete
action of $\Gamma_\L$ on the universal cover of leaves of $\hat\L$ identified
with $G$. Since $G$ is unimodular, these measures are the same as the right
Haar measures on $G$. Hence the global measure $\Phi(\mu)$ is right
$G$-invariant.

For the converse we use the identification of the various definitions of
holonomy invariance given by A. Connes in \cite{Connes:82}. A holonomy
invariant measure can be thought of an equivalence class of objects of the form
$[\omega,\mu]$ where $\omega$ is a volume form on the leaves and $\mu$ is
a measure on $\hat\Lam$ which is $\omega$-invariant.  If $\mu$ is a
$G$-invariant measure on $\hat\Lam$ and $\omega$ is the (left=right) Haar
measure on the leaves, the pair $[\omega,\mu]$ determines a
holonomy-invariant measure on transversals of $\hat\Lam$ (and therefore of
$\Lam$.)
\end{proof}

%%%%%%%%%%%%%%%%%%%%%%%%%%%%%%%%%%%%%%%%%%
%%%%%%%%%%%%%%%%%%%%%%%%%%%%%%%%%%%%%%%%%%
%%%%%%%%%%%%%%

%\bibliographystyle{myalpha}
%\bibliography{allbib}

\def\cprime{$'$} \def\polhk#1{\setbox0=\hbox{#1}{\ooalign{\hidewidth
  \lower1.5ex\hbox{`}\hidewidth\crcr\unhbox0}}}
\providecommand{\bysame}{\leavevmode\hbox to3em{\hrulefill}\thinspace}
\providecommand{\MR}{\relax\ifhmode\unskip\space\fi MR }
% \MRhref is called by the amsart/book/proc definition of \MR.
\providecommand{\MRhref}[2]{%
  \href{http://www.ams.org/mathscinet-getitem?mr=#1}{#2}
}
\providecommand{\href}[2]{#2}

%%%%%%%%%%%%%%%%%%%%%%%%%%%%%%%%%%%%%%%%%%
%%%%%%%%%%%%%%%%%%%%%%%%%%%%%%%%%%%%%%%%

\end{document}